\theoremstyle{plain}
\newtheorem{thm}{Theorem}[section]
\newtheorem*{thm*}{Theorem}
\newtheorem{lem}[thm]{Lemma}
\newtheorem*{lem*}{Lemma}
\newtheorem{cor}[thm]{Corollary}
\newtheorem*{cor*}{Corollary}
\newtheorem{prop}[thm]{Proposition}
\newtheorem*{prop*}{Proposition}
\newtheorem*{ques*}{Question}
\theoremstyle{definition}
\newtheorem{df}[thm]{Definition}
\newtheorem*{df*}{Definition}
\newtheorem*{dfs*}{Definitions}
\newtheorem*{exercise*}{Exercise}
\theoremstyle{remark}
\newtheorem{rem}[thm]{Remark}
\newtheorem*{rem*}{Remark}
\patchcmd{\thmhead}{(#3)}{#3}{}{}
\g@addto@macro\bfseries{\boldmath}
\newcommand{\fk}[1]{\mathfrak{#1}}
\newcommand{\sr}[1]{\mathscr{#1}}
\newcommand{\bo}[1]{\boldsymbol{#1}} 
\newcommand{\Z}{\mathbf{Z}} 
\newcommand{\Q}{\mathbf{Q}} 
\newcommand{\C}{\mathbf{C}} 
\newcommand{\G}{\mathbf{G}} 
\newcommand{\F}{\mathbf{F}}
\newcommand{\CP}{\mathbf{CP}} 
\newcommand{\x}{\times}
\renewcommand{\sl}{\fk{sl}}
\newcommand{\HFKhat}{\smash{\widehat{\mathrm{HFK}}}}
\newcommand{\rKh}{\smash{\overline{\Kh}}}
\newcommand{\rKR}{\smash{\overline{\KR}}}
\DeclareMathOperator{\rk}{rk}
\DeclareMathOperator{\Int}{Int}
\DeclareMathOperator{\Image}{Im}
\DeclareMathOperator{\Id}{Id}
\DeclareMathOperator{\Spin}{Spin}
\DeclareMathOperator{\Kh}{Kh}
\DeclareMathOperator{\KR}{KR}
\DeclareMathOperator{\KRC}{KRC}
\DeclareMathOperator{\SFH}{SFH}
\DeclareMathOperator{\SFC}{SFC}
\title{Split link detection for\\$\sl(P)$ link homology in characteristic $P$}
\author{Joshua Wang}
\date{}
\begin{document}
\maketitle
	
\begin{abstract}
	If $P$ is a prime number, we show that reduced $\sl(P)$ link homology with coefficients in $\Z/P$ detects split links. The argument uses Dowlin's spectral sequence and sutured Floer homology with twisted coefficients. When $P = 2$, we recover Lipshitz--Sarkar's split link detection result for Khovanov homology with $\Z/2$ coefficients. 
\end{abstract}

\section{Introduction}

Khovanov homology \cite{MR1740682} strikes a nice balance of computability and power. Although it is defined in an elementary and combinatorial way, it detects the unknot \cite{MR2805599}, the trefoils \cite{baldwin2021khovanovtrefoil}, the figure-eight \cite{MR4275096}, and the cinquefoil \cite{baldwin2021khovanovcinquefoil}. Additionally, it detects many links \cite{MR2653731,MR3190305,MR3332892,MR4049809,martin2020khovanov,xie2020links,li2020detection}, and the complete list of links with minimal rank Khovanov homology is known \cite{xie2021classification}. Beyond the detection of individual knots and links, Khovanov homology also detects certain topological properties. It detects whether a link is split \cite{lipshitz2019khovanov}, and it detects the connected sum of a fixed pair of knots among all band sums of the pair \cite{wang2020cosmetic}. All detection results so far have been derived from relationships between Khovanov homology and invariants arising from Floer theory, such as instanton homology \cite{MR2805599} and Heegaard Floer homology \cite{MR2141852}. 

Khovanov homology is the $N = 2$ case of $\sl(N)$ link homology \cite{MR2391017}, which can also be defined combinatorially \cite{MR4164001}, but is much less well-understood. Using a spectral sequence relating $\sl(N)$ link homology and Khovanov homology \cite{MR3982970}, some detection results for $\sl(N)$ link homology can be obtained from the analogous detection results for Khovanov homology. In this paper, we prove a detection result for $\sl(N)$ link homology that does not appear to follow from the analogous detection result for Khovanov homology. 
We prove that when $P$ is prime, $\sl(P)$ link homology with coefficients in the field $\F_P \coloneq \Z/P$ detects split links, generalizing and recovering Lipshitz--Sarkar's split link detection result for Khovanov homology with $\F_2$ coefficients \cite[Theorem 1]{lipshitz2019khovanov}. 

We let $\rKR_N(L,q;R)$ denote the reduced $\sl(N)$ link homology of a link $L$ with respect to a basepoint $q \in L$, where coefficients are taken in a ring $R$. An additional basepoint $r\in L$ defines a basepoint operator $X_r\colon \rKR_N(L,q;R) \to \rKR_N(L,q;R)$ satisfying $X_r^N = 0$. 

\begin{thm}\label{thm:mainthm}
	Let $L$ be an oriented link in $S^3$ with basepoints $q,r \in L$, and let $P$ be a prime number. There is an embedded sphere in $S^3\setminus L$ that separates $q$ and $r$ if and only if $\rKR_P(L,q;\F_P)$ is a free module over $\F_P[X]/X^P$ where the action of $X$ is the basepoint operator $X_r$.
\end{thm}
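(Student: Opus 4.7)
The proof will address the two directions separately. For the forward direction, suppose a sphere $S \subset S^3 \setminus L$ separates $q$ and $r$, giving a split decomposition $L = L_1 \sqcup L_2$ with $q \in L_1$ and $r \in L_2$. A K\"unneth formula for $\sl(P)$ link homology should give
$$\rKR_P(L, q; \F_P) \cong \rKR_P(L_1, q; \F_P) \otimes_{\F_P} \KR_P(L_2; \F_P),$$
with the basepoint operator $X_r$ acting only on the second factor. Since unreduced $\sl(P)$ homology at a basepoint splits as reduced $\sl(P)$ homology tensored with $\F_P[X]/X^P$, the factor $\KR_P(L_2; \F_P)$ is free over $\F_P[X_r]/X_r^P$, and this freeness is inherited by the tensor product.

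The backward direction is the substantive one, and the plan is to adapt Lipshitz--Sarkar's strategy by substituting Dowlin's spectral sequence for Ozsv\'ath--Szab\'o's branched double cover spectral sequence. First, I would invoke Dowlin's spectral sequence, in an appropriate $\F_P$-version compatible with basepoint operators, to relate $\rKR_P(L, q; \F_P)$ to a reduced link Floer homology group of $L$. Second, I would argue that freeness of the $E_2$ page over $\F_P[X_r]/X_r^P$ propagates through the spectral sequence to produce a corresponding freeness, or an equivalent rank identity, on the Floer side. Third, I would reinterpret this Floer-theoretic condition in terms of sutured Floer homology of the link complement $S^3 \setminus N(L)$ equipped with meridional sutures determined by $q$ and $r$, and apply the twisted-coefficient techniques of Lipshitz--Sarkar together with Juh\'asz-style sutured manifold decomposition to extract a sphere in $S^3 \setminus L$ separating $q$ from $r$.

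The main obstacle will be the second step. Freeness of a module over $\F_P[X]/X^P$ is not generally preserved under spectral sequences, and the naive rank inequality in a spectral sequence goes the wrong way for what one wants to deduce. I anticipate that the argument requires identifying $X_r$ with an operator on the $E_r$-page that commutes with the differentials, combined with an $X_r$-equivariant rank or Euler characteristic count that forces an inequality to collapse to equality precisely when the $E_2$-page is free. Verifying that Dowlin's construction behaves correctly over $\F_P$ and with the basepoint operator, and that the resulting rank identity on the Floer side is strong enough to feed into the sutured machinery with twisted coefficients, together constitute the technical heart of the proof.
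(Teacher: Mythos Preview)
Your forward direction is essentially the paper's argument, though you should be aware that the splitting $\KR_P(L_2;\F_P) \cong \rKR_P(L_2,r;\F_P) \otimes \F_P[X]/X^P$ is \emph{not} formal: it fails over $\Q$ and for $\sl(N)$ with $N$ composite. This is exactly where primality of $P$ and characteristic $P$ are used, and the paper imports it as a separate theorem from \cite{wang2021sln}.

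The backward direction, however, has a genuine gap at the level of strategy. You propose running Dowlin's spectral sequence over $\F_P$ and pushing freeness through it, and you correctly flag both steps as dangerous. They are in fact fatal as stated: Dowlin's construction is only known over $\Q$, and there is no mechanism for freeness over $\F_P[X]/X^P$ to survive a spectral sequence whose differentials need not be $X$-equivariant in any useful filtered sense. The paper avoids both obstacles by a different maneuver that you are missing entirely. Rather than transporting the module structure, it converts freeness into a purely numerical statement: if $\rKR_N(L,q;\F)$ is free, then for \emph{any} band $b$ merging the marked components, $\dim \rKR_N(K_{b+n},q;\F)$ is independent of the number of twists $n$ (Proposition~\ref{prop:freeImpliesDimIndependent}, proved by a skein-triangle argument using $X_r^{N-1}=0$ on the singular resolution). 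This boundedness is robust: it passes to $\Q$ by universal coefficients, descends from $\sl(N)$ to Khovanov via Wedrich's rank-reducing spectral sequence, and then to $\HFKhat(K_{b+n};\Q)$ via Dowlin over $\Q$. Only at this final stage does the twisted sutured Floer argument enter, and it is applied not to $L$ directly but to the family of band-surgered links $K_{b+n}$: a surgery exact triangle with $\Q[\Z/n]$ coefficients shows that if no sphere separates $q$ and $r$, then $\dim\HFKhat(K_{b+n};\Q)\to\infty$.

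In short, the key idea you are missing is the passage through band surgeries, which replaces a fragile algebraic condition (freeness) by a robust numerical one (bounded dimensions along a family) that can be chased through coefficient changes and spectral sequences by rank inequalities alone.
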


\begin{rem}
	The case when $P = 2$ is \cite[Theorem 1]{lipshitz2019khovanov}. Although our proof of Theorem~\ref{thm:mainthm} uses the spectral sequence relating $\sl(P)$ link homology and Khovanov homology, our method is indirect and requires proving a new detection result for Khovanov homology using Floer theory. 
\end{rem}

The restriction on the coefficient field and on the primality of $P$ arises only in the proof that reduced $\sl(P)$ link homology is free over $\F_P[X]/X^P$ when $L$ is split. This implication is equivalent to a general statement about the relationship between reduced and unreduced $\sl(P)$ link homology in characteristic $P$ that the author proves in \cite{wang2021sln}. The reverse implication does not require these restrictions. 

\begin{thm}\label{thm:freeImpliesSplit}
	Let $L$ be an oriented link in $S^3$ with basepoints $q,r \in L$, let $N \ge 2$, and let $\F$ be a field. If $\rKR_N(L,q;\F)$ is a free module over $\F[X]/X^N$ where $X = X_r$, then there is an embedded sphere in $S^3\setminus L$ that separates $q$ and $r$. 
\end{thm}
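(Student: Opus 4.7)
The plan is to adapt the reverse-direction argument from Lipshitz--Sarkar's split-link detection theorem for Khovanov homology \cite{lipshitz2019khovanov} to the $\sl(N)$ setting. Their approach combines Dowlin's spectral sequence (to translate the freeness hypothesis from the combinatorial side to a Heegaard Floer invariant) with sutured Floer homology with twisted coefficients of the link exterior (to extract the separating sphere). I would run a parallel argument for $\rKR_N(L,q;\F)$ for any $N \ge 2$ and any field $\F$.

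First I would invoke Dowlin's spectral sequence, or an appropriate $\sl(N)$-extension, whose $E_2$-page is a suitable incarnation of $\rKR_N(L,q;\F)$ and whose abutment is a version of link Floer homology of $L$ carrying a basepoint operator $U_r$ with $U_r^N = 0$. The essential input is equivariance: the $X_r$ action on the $E_2$-page corresponds, through the spectral sequence, to the $U_r$ action on the limit. Under the freeness hypothesis the identity $\dim_\F \rKR_N(L,q;\F) = N\cdot\rank(X_r^{N-1})$ holds, and the $X_r$-equivariant differentials should allow this identity (or its analog) to be transported to the $E_\infty$-page, forcing the limiting link Floer invariant to have the analogous rank structure over $\F[U_r]/U_r^N$.

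Second, following Lipshitz--Sarkar, I would use sutured Floer homology with twisted coefficients of the link exterior --- with meridional sutures near $q$ and $r$ --- to detect the splitting sphere. The rank-maximality condition from the previous step would force non-vanishing of an appropriately twisted sutured Floer group, and a Gabai-type surface decomposition would then produce the embedded sphere in $S^3 \setminus L$ separating $q$ and $r$.

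The main obstacle is twofold. First, one needs an appropriate $\sl(N)$-analog of Dowlin's spectral sequence, defined over an arbitrary field $\F$ and preserving the $X_r$-module structure; Dowlin's original construction is aimed at Khovanov homology. Second, the descent of freeness is delicate, because the homology of an $X$-equivariant differential on a free $\F[X]/X^N$-module need not be free when $N \ge 3$ (for instance, multiplication by $X^{N-1}$ on $\F[X]/X^N$ is $X$-equivariant with non-free homology); the argument therefore cannot rely on abstract module-theoretic descent but must exploit specific numerical or structural features of the Dowlin spectral sequence and its Floer-theoretic limit.
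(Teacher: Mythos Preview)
Your proposal identifies its own gaps honestly, and those gaps are fatal as stated. There is no $\sl(N)$ analog of Dowlin's spectral sequence over an arbitrary field $\F$; Dowlin's construction is specifically for $N=2$ and over $\Q$. Even if one existed, you correctly observe that freeness over $\F[X]/X^N$ does not descend through an $X$-equivariant spectral sequence for $N\ge 3$, and you offer no mechanism to get around this. So the proposal is not a proof but an outline with two unresolved obstructions at its core.

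The paper's argument sidesteps both obstacles by an entirely different route. Rather than trying to push the module structure through a spectral sequence, it first converts the freeness hypothesis into a purely numerical statement about auxiliary links: using the skein exact triangles and the identity $X_r^{N-1}=0$ on the singular resolution $D_s$ (Lemma~\ref{lem:Nminus1powerZeroOnDs}), freeness forces $\dim_\F \rKR_N(K_{b+n},q;\F)$ to be independent of the number $n$ of twists in any merging band (Proposition~\ref{prop:freeImpliesDimIndependent}). This dimension statement is robust: universal coefficients lets one pass from $\F$ to $\Q$, and Wedrich's rank-reducing spectral sequence \cite{MR3982970} lets one step down from $\sl(N)$ to $\sl(2)$ over $\Q$. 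Only then does the ordinary Dowlin spectral sequence enter, giving a bound on $\dim_\Q\HFKhat(K_{b+n};\Q)$, and the twisted sutured Floer argument (Theorem~\ref{thm:unboundedHFKhat}) shows this bound forces a separating sphere. The key idea you are missing is the passage to band surgeries $K_{b+n}$: this replaces the delicate module-theoretic descent with a dimension inequality that survives every step of the chain $\F \to \Q$, $\sl(N)\to\sl(2)$, $\rKh \to \HFKhat$.
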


\begin{rem}
	In particular, if the reduced Khovanov homology $\rKh(L,q;\F)$ of $L$ with respect to $q$ is free over $\F[X]/X^2$ where $X = X_r$, there is an embedded sphere in $S^3\setminus L$ that separates $q$ and $r$. 
	Lipshitz--Sarkar's proof of this statement when $\F = \F_2$ does not appear to generalize to arbitrary fields since they use the spectral sequence from reduced Khovanov homology to the Heegaard Floer homology of the double branched cover \cite{MR2141852} which requires $\F_2$ coefficients in an essential way \cite{MR3071132}. 
\end{rem}

\begin{rem}
	We note that the condition that $\rKR_N(L,q;\F)$ is free over $\F[X]/X^N$ where $X = X_r$ is not symmetric in $q$ and $r$. For example, if $\F = \Q$ and $L$ is the split union of the trefoil $T_{2,3}$ and the unknot $U$, then the condition is satisfied if $q \in T_{2,3}$ and $r \in U$ but fails if $r \in T_{2,3}$ and $q \in U$. If $N = P$ is prime and $\F = \F_P$, the condition is symmetric by \cite[Theorem 1.1]{wang2021sln}. 
\end{rem}

We prove Theorem~\ref{thm:freeImpliesSplit} by relating the module structure of $\rKR_N(L,q;\F)$ to the $\sl(N)$ link homology groups of band surgeries on $L$. Assume that $q$ and $r$ lie on distinct components of $L$, and let $b$ be an orientation-preserving band that merges the components of $L$ marked by $q$ and $r$. Let $K_b$ be the link obtained by band surgery along $b$. Note that $K_b$ has one fewer component than $L$. Then let $K_{b+n}$ be obtained by adding $n \in \Z$ full twists to the band. We assume that the band surgery is done away from the basepoints $q,r$ so that they can be viewed as basepoints on $K_{b+n}$. See Figure~\ref{fig:bandSurgeries}. 

\begin{figure}[!ht]
	\centering
	\vspace{5pt}
	\labellist
		\pinlabel $K_b$ at 510 -15
		\pinlabel $K_{b+1}$ at 880 -15
		\pinlabel $b$ at 50 245
		\pinlabel $L$ at 145 -15
		\pinlabel $\bullet$ at 27 27
		\pinlabel $q$ at 13 10
		\pinlabel $\bullet$ at 269 27
		\pinlabel $r$ at 278 12
		\pinlabel $\bullet$ at 386 27
		\pinlabel $q$ at 372 10
		\pinlabel $\bullet$ at 628 27
		\pinlabel $r$ at 637 12
		\pinlabel $\bullet$ at 745 27
		\pinlabel $q$ at 731 10
		\pinlabel $\bullet$ at 987 27
		\pinlabel $r$ at 996 12
	\endlabellist
	\includegraphics[width=.27\textwidth]{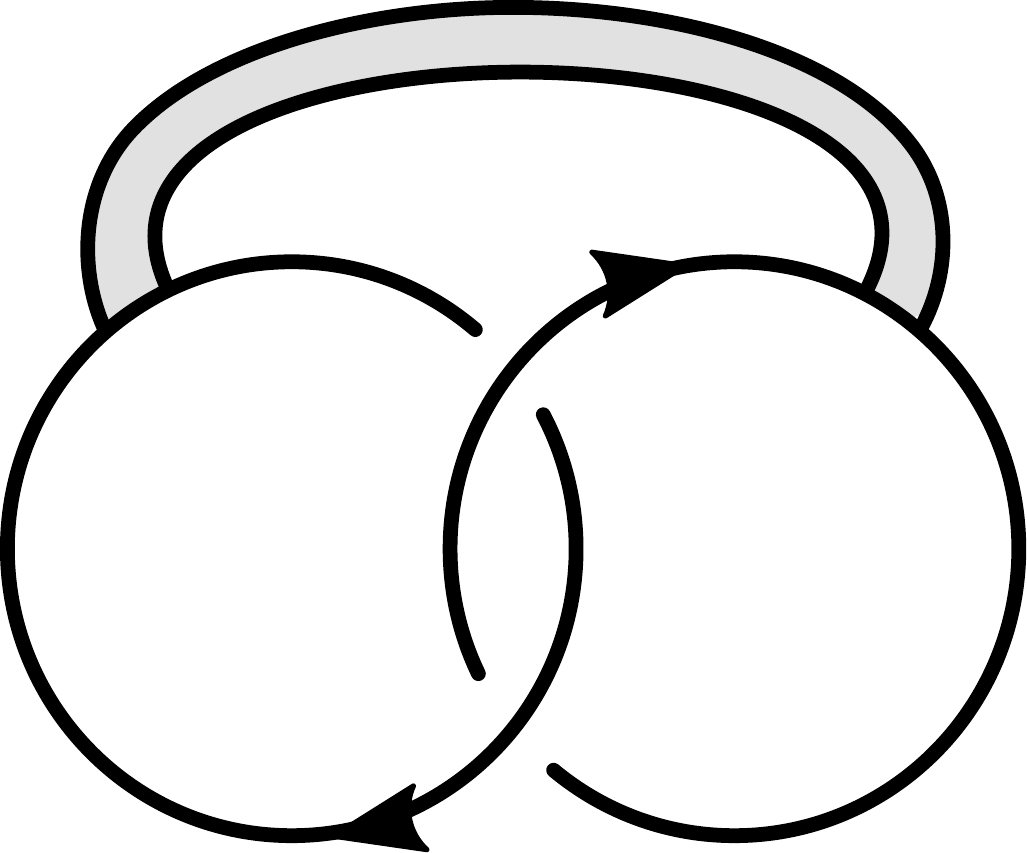}\hspace{20pt}
	\includegraphics[width=.27\textwidth]{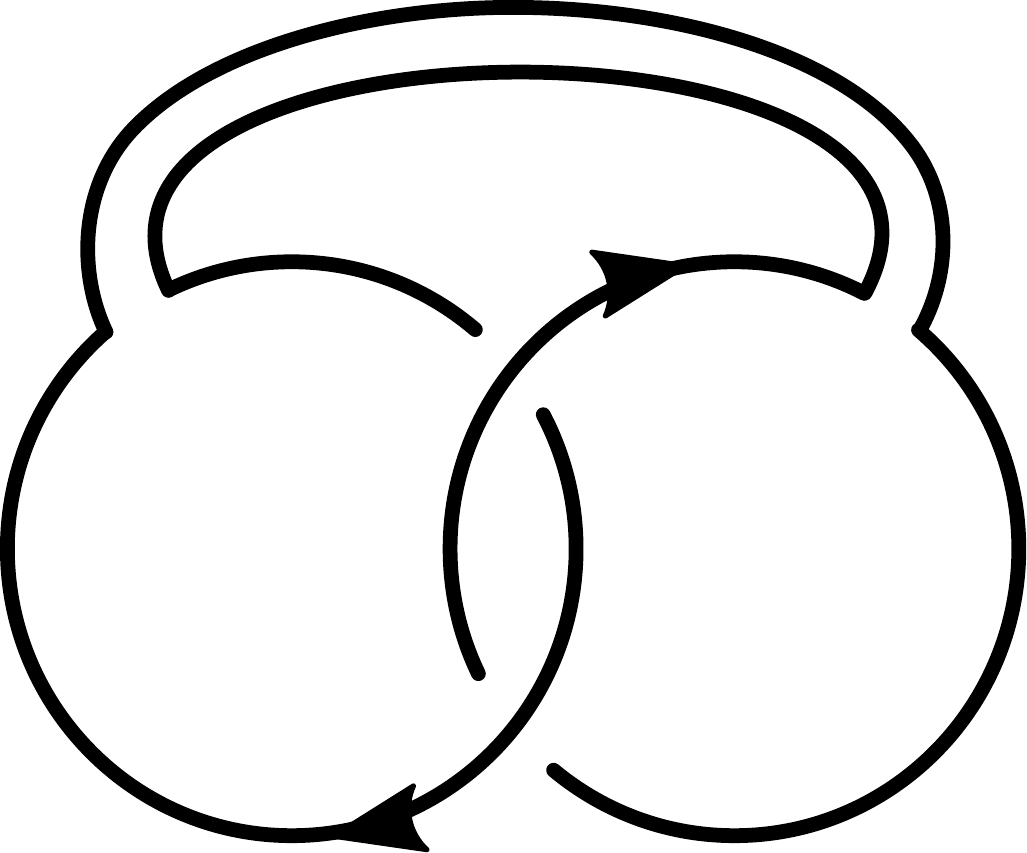}\hspace{20pt}
	\includegraphics[width=.27\textwidth]{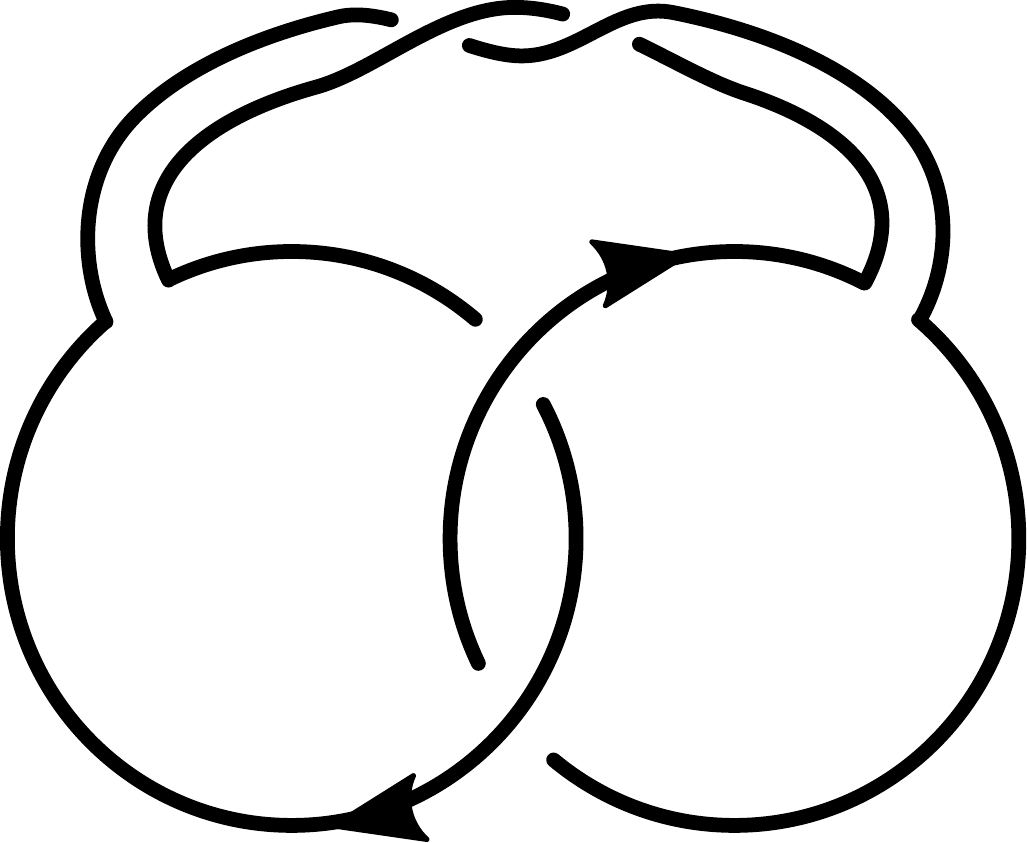}
	\vspace{10pt}
	\captionsetup{width=.8\linewidth}
	\caption{$K_b$ is obtained from $L$ by band surgery along a band $b$. $K_{b+n}$ is then obtained by adding $n$ full twists.}
	\label{fig:bandSurgeries}
\end{figure}

The following two propositions imply Theorem~\ref{thm:freeImpliesSplit}. In both propositions, $N \ge 2$ is an integer, $\F$ is any field, and $L$ is an oriented link with basepoints $q,r$ on distinct components.

\begin{prop}\label{prop:freeImpliesDimIndependent}
	If $\rKR_N(L,q;\F)$ is a free module over $\F[X]/X^N$ where $X = X_r$, then for any band $b$ that merges the marked components of $L$, the dimension of $\rKR_N(K_{b+n},q;\F)$ is independent of $n\in\Z$. 
\end{prop}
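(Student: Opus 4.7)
The plan is to apply the unoriented skein exact triangle for reduced $\sl(N)$ link homology at an extra crossing in the twist region of the band, and then to leverage the $\F[X]/X^N$-module structure from the hypothesis to obtain the desired dimension equality.

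Let $K_{b+n+1/2}$ be the link obtained from $K_{b+n}$ by adding a single crossing (a half-twist) at the top of the twist region of the band. The unoriented skein exact triangle at this crossing takes the form
\[
\cdots \to \rKR_N(K_{b+n}) \to \rKR_N(K_{b+n+1/2}) \to \rKR_N(L') \to \cdots
\]
up to grading shifts, where $L'$ is the link obtained by the ``band-cutting'' resolution. I would argue that $L'$ is isotopic to $L$: this resolution disconnects the two long edges of the band locally at the top, so the remaining $2n$ crossings of the twist region become self-crossings on a single arc joining the two bottom corners of the band region, and these self-crossings can be removed by Reidemeister moves to recover $L$. Since $r$ lies outside the resolved crossing, the operator $X_r$ acts on all three terms and commutes with the triangle maps, so the triangle is one of $\F[X]/X^N$-modules with $X$ acting as $X_r$.

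Iterating the triangle for the second half-twist taking $K_{b+n+1/2}$ to $K_{b+n+1}$ gives a second skein triangle of $\F[X]/X^N$-modules, involving another copy of $\rKR_N(L, q; \F)$. The hypothesis provides an isomorphism $\rKR_N(L, q; \F) \cong M \otimes_\F \F[X]/X^N$ of $\F[X]/X^N$-modules, for some $\F$-vector space $M$, with $X_r$ acting on the second factor. The key step is to identify the composed connecting map on $\rKR_N(L, q; \F)$ coming from the two triangles as a specific $\F[X]/X^N$-linear endomorphism, such as multiplication by $X_r$ or a closely related operator. Once this is done, the rank-nullity formula for free $\F[X]/X^N$-modules forces the $\rKR_N(L, q; \F)$-contributions from the two triangles to cancel in the dimension count, yielding $\dim \rKR_N(K_{b+n+1}, q; \F) = \dim \rKR_N(K_{b+n}, q; \F)$.

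The main obstacle is the last step. A naive estimate from the two skein triangles gives only
\[
\bigl|\dim \rKR_N(K_{b+n+1}, q; \F) - \dim \rKR_N(K_{b+n}, q; \F)\bigr| \le 2\dim \rKR_N(L, q; \F),
\]
so freeness must be leveraged to refine this to equality. I expect that the composed connecting map can be identified geometrically, likely via a chain-level analysis of the cone constructions coming from the two skein triangles, with a specific $\F[X]/X^N$-endomorphism whose kernel and image in a free module are well-understood. This geometric identification is the essential technical input.
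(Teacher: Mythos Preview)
Your approach has a structural problem for $N > 2$: there is no ``unoriented skein triangle'' in $\sl(N)$ link homology relating three links. The two resolutions of a crossing are the oriented resolution (a link) and the singular MOY resolution $D_s$ (a trivalent graph with an edge labeled $2$), and the skein triangle relates the crossing diagram, the oriented resolution, and $D_s$. In your setup, the two strands in the band region are antiparallel (because the band is orientation-preserving), so when you add a crossing to form $K_{b+n+1/2}$, the \emph{oriented} resolution is precisely the one that cuts the band and yields a link isotopic to $L$; the other resolution --- the one that would return $K_{b+n}$ --- is the singular one, and for $N>2$ it produces the MOY graph $D_s$, not a link. Thus your proposed triangle $K_{b+n} \to K_{b+n+1/2} \to L'$ simply does not exist for $N > 2$; the actual triangle is $K_{b+n+1/2} \leftrightarrow L \leftrightarrow D_s$, and $K_{b+n}$ does not appear.

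Even for $N=2$, you acknowledge the key step is missing. The paper supplies exactly the ingredient you are looking for, but via a cleaner route that avoids any intermediate half-twist link. Rather than adding half-twists, one takes a \emph{single} crossing and uses both of its signs: with a positive crossing the diagram is $D_+$ representing $K_{b+1}$, with a negative crossing it is $D_-$ representing $K_b$, and the oriented and singular resolutions $D_\ell = L$ and $D_s$ are \emph{shared} by the two triangles. The essential technical input --- replacing your missing identification of a composed connecting map --- is that on $\rKR_N(D_s,q;\F)$ one has $X_r^{N-1} = 0$. This follows from the dot-migration relation $X_r X_q = E_2$ (where $E_2$ is the weight-$2$ operator on the thick edge) together with $E_2^{N-1} = 0$, which holds because the thick-edge operators satisfy the relations of $H^*(\G(2,N))$. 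Once you know this, freeness of $\rKR_N(L,q;\F)$ gives $\ker(X^{N-1}) = \Image(X)$, and since the map $f\colon \rKR_N(D_s) \to \rKR_N(L)$ intertwines $X_r$, you get $\Image(f) \subseteq \ker(X^{N-1}) = \Image(X) \subseteq \ker(g)$ for the map $g\colon \rKR_N(L) \to \rKR_N(K_{b+1})$. Exactness forces these to be equalities, so $\rk(g) = \dim\rKR_N(L)/N$; the same reasoning applied to the $D_-$ triangle pins down the corresponding rank there, and comparing the two triangles through their common term $\rKR_N(D_s)$ yields the dimension equality directly.
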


\begin{prop}\label{prop:threeEquivalentStatements}
	The following three statements are equivalent: \begin{enumerate}[noitemsep]
		\item There is an embedded sphere in $S^3\setminus L$ that separates $q$ and $r$. 
		\item For any band $b$ that merges the marked components of $L$, the dimension of $\rKR_N(K_{b+n},q;\F)$ is independent of $n \in \Z$. 
		\item For some band $b$ that merges the marked components of $L$, the dimension of $\rKR_N(K_{b+n},q;\F)$ is a bounded function of $n \in \Z$.
	\end{enumerate}
\end{prop}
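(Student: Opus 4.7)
I plan to prove the cyclic implications $(1) \Rightarrow (2) \Rightarrow (3) \Rightarrow (1)$; the middle is immediate, the first is geometric, and the third is the main obstacle, where Floer-theoretic input enters.

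For $(1) \Rightarrow (2)$, assume $L = L_1 \sqcup L_2$ has a separating sphere $S$ with $q \in L_1$ and $r \in L_2$, and let $b$ be any band merging the two components. An innermost-disk argument on the intersection of $S$ with the core arc of $b$ lets me isotope $b$, rel its attachment to $L$, so that the core meets $S$ transversely in a single point; the framing may shift by an integer under this isotopy, but this only reindexes the family $\{K_{b+n}\}_{n \in \Z}$, so I may assume $b$ is simple. I then describe the family via Rolfsen twists: $K_{b+n}$ is the image of $K_b$ under $-1/n$ Dehn surgery on a meridional unknot $U$ of $b$. Because $b$ is simple, $U$ is isotopic into $S$, and since $K_b$ meets $S$ in exactly two points clustered near the band's core, $U$ bounds a subdisk $D \subset S$ disjoint from $K_b$. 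The Rolfsen twist is therefore trivial on $K_b$, so $K_{b+n} \cong K_b$ for all $n \in \Z$ and $\dim \rKR_N(K_{b+n}; \F)$ is constant.

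For $(3) \Rightarrow (1)$, suppose that for some band $b$ the dimension of $\rKR_N(K_{b+n}; \F)$ is bounded in $n$, and aim to produce a splitting sphere in $S^3 \setminus L$. My plan has three stages. First, combine the spectral sequence from $\sl(N)$ link homology to Khovanov homology with Dowlin's spectral sequence in order to reduce to a boundedness statement about a Floer-theoretic invariant of $K_{b+n}$---specifically, a sutured Floer homology of the exterior of $K_{b+n}$ with twisted coefficients, as indicated by the paper's abstract. Second, invoke a twist-family rigidity theorem: in the appropriate sutured Floer setup, the dimension of the invariant in a twist family $K_{b+n}$ is bounded only if the twist curve $U$ (a meridian of $b$) bounds a compressing disk in $S^3 \setminus K_b$. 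Third, convert the compressing disk into a splitting sphere: cap $U$ with a transverse meridian disk of the band; after undoing the band surgery, the resulting sphere lies in $S^3 \setminus L$ and separates the two halves of the band, hence $q$ from $r$.

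The main obstacle is the twist-family rigidity result in stage two; it is the Floer-theoretic heart of the argument, and it is where the twisted coefficients in sutured Floer homology appear essential to establish the dimension bound at the required generality---arbitrary field $\F$ and arbitrary $N \ge 2$.
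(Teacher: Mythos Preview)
Your argument for $(1)\Rightarrow(2)$ has a genuine gap. An ``innermost-disk argument'' does not apply to the intersection of an arc with a sphere: the intersection is a finite set of points, and the subarcs of the core lying in $B_2$ need not be homotopic rel endpoints (in $B_2\setminus L$) into $S$. Concretely, if the core of $b$ enters $B_2$, winds nontrivially in $\pi_1(B_2\setminus (L\cap B_2))$, returns to $B_1$, and then crosses back to attach to $L_2$, no isotopy of $b$ rel its attachment will reduce the number of intersections with $S$ to one. Equivalently, under $\pi_1(S^3\setminus L)\cong G_1 * G_2$ the core represents a word of syllable length $>2$, which cannot be shortened. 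So your conclusion that the meridian $U$ bounds a disk in $S^3\setminus K_b$ (hence $K_{b+n}\cong K_b$) is unjustified, and indeed the paper does not claim $K_{b+n}\cong K_b$. The paper instead uses the skein exact triangles together with Miyazaki's ribbon concordance from the connected sum $K_\#$ to $K_b$ and the injectivity of ribbon-concordance maps on $\rKR_N$; this compares the ranks of the connecting maps for $b$ with those for the trivial band, yielding only equality of \emph{dimensions}, not an isotopy.

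For $(3)\Rightarrow(1)$ your outline is close in spirit but your stage two is the wrong intermediate statement. The ``twist-family rigidity'' you invoke --- bounded Floer dimension forces $U$ to bound a compressing disk in $S^3\setminus K_b$ --- is stronger than what is needed and is not what the paper establishes; in fact, by the previous paragraph, whenever $L$ is split but $b$ is a complicated band the family $\HFKhat(K_{b+n};\Q)$ is bounded while $U$ need not bound such a disk, so this intermediate claim appears to be false. The paper bypasses $K_b$ entirely: after passing from $\rKR_N(\,\cdot\,;\F)$ to $\HFKhat(\,\cdot\,;\Q)$ via universal coefficients, the rank-reducing spectral sequence, and Dowlin, it uses a twisted surgery triangle and a product-annulus decomposition to bound $\dim\HFKhat(K_{b+n};\Q)$ below by $\dim\underline{\SFH}(S^3(L);\Q[\Z/n]_\zeta)$, and then shows the latter is $\ge n$ whenever $S^3(L)$ is taut (reducing via a split-union formula to the irreducible case). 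The Floer-theoretic input is thus about tautness of the sutured exterior of $L$, not compressibility of $U$ in the exterior of $K_b$.
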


\begin{rem}
	In fact, if there does not exist an embedded sphere in $S^3\setminus L$ that separates $q$ and $r$, \[
		\dim \rKR_N(K_{b+n},q;\F) \to \infty
	\]as $n \to \infty$ and as $n \to -\infty$ for any band $b$ that merges the marked components of $L$. 
\end{rem}
\begin{rem}
	We prove the analog of Proposition~\ref{prop:threeEquivalentStatements} for knot Floer homology in Theorem~\ref{thm:unboundedHFKhat}.
\end{rem}

Proposition~\ref{prop:freeImpliesDimIndependent} is proved using a skein exact triangle argument. The first statement of Proposition~\ref{prop:threeEquivalentStatements} implies the second by a ribbon concordance argument that the author previously used in connection with the cosmetic crossing conjecture \cite{wang2020cosmetic}. The third statement implies that the dimension of $\rKh(K_{b+n},q;\Q)$ is a bounded function of $n \in \Z$ by the universal coefficient theorem and the spectral sequence relating $\sl(N)$ link homology and Khovanov homology \cite{MR3982970}. By Dowlin's spectral sequence \cite{dowlin2018spectral}, the dimension of $\HFKhat(K_{b+n};\Q)$ are also bounded in $n$. By a more elaborate version of the main argument of \cite{MR4311818} using sutured Floer homology with twisted coefficients, we deduce the existence of an appropriate sphere in $S^3\setminus L$.

\theoremstyle{definition}
\newtheorem*{ack}{Acknowledgments}
\begin{ack}
	I thank Artem Kotelskiy and Claudius Zibrowius for many helpful discussions, and I thank Paul Wedrich for explaining to me the rank-reducing spectral sequence for links. I thank Robert Lipshitz for useful feedback and suggestions, especially regarding orientations. 
	I also thank my advisor Peter Kronheimer for his continued guidance, support, and encouragement. This material is based upon work supported by the NSF GRFP through grant DGE-1745303.
\end{ack}

\section{Sutured Floer homology with twisted coefficients}

The purpose of this section is to prove the following result. Only the statement of this result will be used in the proofs of the main results in the next section. 

\begin{thm}\label{thm:unboundedHFKhat}
	Let $L$ be an oriented link in $S^3$ with basepoints $q,r \in L$ on distinct components. 
	If there does not exist an embedded sphere in $S^3\setminus L$ that separates $q$ and $r$, then for any band $b$ merging the marked components of $L$,\[
		\dim_\Q \HFKhat(K_{b + n};\Q) \to \infty \quad\text{ as }n \to \infty\text{ and as }n \to -\infty.
	\]
\end{thm}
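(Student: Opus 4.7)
The strategy is to reformulate the $n$-dependence via Dehn surgery on an auxiliary unknot and then apply sutured Floer homology with twisted coefficients, extending the author's cosmetic-crossing argument from \cite{MR4311818}. Let $c \subset S^3 \setminus L$ be a small unknot bounding a disk that meets the band $b$ in a single arc, so that inserting $n$ full twists into $b$ amounts to performing Dehn surgery of slope $-1/n$ on $c$. Then $K_{b+n}$ is the image of $K_b$ under this Dehn surgery, and the sutured knot complement $(S^3 \setminus N(K_{b+n}), \Gamma_n)$ of $K_{b+n}$, equipped with meridional sutures at $q$ and $r$, is obtained by Dehn-filling the $c$-boundary of the fixed sutured manifold $(M, \Gamma) = (S^3 \setminus N(K_b \sqcup c), \Gamma)$ with slope $-1/n$, where $\Gamma$ places two meridional sutures near $q$ and $r$ on the $K_b$-boundary and leaves the $c$-boundary unsutured.

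The core of the proof would be a growth estimate of the form
\[
  \dim_\Q \HFKhat(K_{b+n}; \Q) \ \ge\ |n|\cdot d\ -\ C,
\]
where $d$ is the $\Q$-dimension of the twisted-coefficient sutured Floer homology of $(M, \Gamma)$ with local system determined by the meridian of $c$, and $C$ is independent of $n$. This should follow from a surgery spectral sequence for Dehn-filling the toroidal $c$-boundary, decorated with the twisted coefficient system. Twisting is essential because the untwisted $\SFH(M, \Gamma)$ cancels out on the $T^2$ boundary from $c$ by Euler-characteristic arguments, so only the twisted version can detect linear growth of $\HFKhat(K_{b+n})$ in $n$; in particular, nonvanishing of the twisted $\SFH$ forces $\dim \HFKhat(K_{b+n}) \to \infty$ as $n \to \pm\infty$.

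Finally, I would show that this twisted $\SFH$ of $(M, \Gamma)$ is nonzero whenever no embedded sphere in $S^3 \setminus L$ separates $q$ from $r$, by verifying tautness of $(M, \Gamma)$: any compressing disk or nontrivial reducing sphere in $M$ would, by standard innermost-disk arguments applied to the disk bounded by $c$, yield an embedded sphere in $S^3 \setminus L$ separating $q$ from $r$. A twisted-coefficient analogue of Juh\'asz's nonvanishing theorem for taut sutured manifolds would then deliver the desired nonvanishing. The main obstacle will be the growth estimate: one must arrange the twisted local system to be aligned exactly with the $c$-meridian so that the Dehn-filling spectral sequence produces a lower bound growing linearly in $|n|$ rather than a constant one, and controlling the higher differentials and any extension problems with the twisted coefficients is the step where the "more elaborate" version of the earlier cosmetic-crossing argument is truly needed.
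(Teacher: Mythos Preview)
Your overall instinct—reinterpret the twist parameter $n$ as Dehn surgery on an auxiliary unknot $c$ and feed this into a twisted-coefficient sutured Floer argument—is the same as the paper's. But several concrete pieces of your setup do not work as stated, and the mechanism you propose for producing growth in $n$ is not the one that actually succeeds.

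First, the sutured manifold $(M,\Gamma)=(S^3\setminus N(K_b\sqcup c),\Gamma)$ with the $c$-boundary left unsutured is not a balanced sutured manifold, so $\SFH$ is not defined for it. The paper avoids this entirely: it performs $0$-surgery on $c$ to obtain a link $\kappa(L,q,r)\subset S^1\times S^2$ (independent of $b$) and works with its sutured exterior, which is balanced. The framed core $C'$ of this surgery is the knot on which one then runs the surgery triangle.

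Second, the growth mechanism is different from what you sketch. You look for a fixed local system and a spectral sequence yielding $\dim\HFKhat(K_{b+n})\ge |n|\cdot d - C$. The paper instead uses, for each $n\ge 1$, the Ozsv\'ath--Szab\'o style surgery \emph{exact triangle}
\[
\HFKhat(K_b;\Q)\;\longrightarrow\;\HFKhat(K_{b-n};\Q)\;\longrightarrow\;\underline{\SFH}\bigl((S^1\times S^2)(\kappa);\Q[\Z/n]_{[C']}\bigr)\;\longrightarrow\;\cdots
\]
so the dependence on $n$ lives in the coefficient ring $\Q[\Z/n]$, not in a filtration length. The linear growth then follows from the elementary fact that for a \emph{taut} balanced sutured manifold one has $\dim_\Q\underline{\SFH}(\,\cdot\,;\Q[\Z/n]_\zeta)\ge n$, which is a twisted analogue of Juh\'asz's nonvanishing (product sutured manifolds give exactly $n$, and nice surface decompositions only increase dimension). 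No higher differentials or extension problems need to be controlled.

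Third, your tautness step targets the wrong manifold and the argument sketch is not correct. Irreducibility of $S^3\setminus N(K_b\sqcup c)$ is not what encodes ``no sphere in $S^3\setminus L$ separates $q$ and $r$''; $K_b$ and $L$ are different links, and an innermost-disk argument on the disk bounded by $c$ does not by itself produce a separating sphere in the complement of $L$. The paper instead performs a nice surface decomposition of $(S^1\times S^2)(\kappa)$ along a product annulus $A$ (coming from the disk $c$ bounds, capped off after $0$-surgery) to land directly on $S^3(L)$, the sutured exterior of the original link $L$. Under this decomposition the twisting class $[C']$ pushes forward to the class of the band's core. Now tautness is exactly the hypothesis: if $S^3\setminus L$ is irreducible then $S^3(L)$ is taut; if not, one splits off the components not containing $q,r$ via a product-disc decomposition and applies tautness to the remaining irreducible piece. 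This is where the non-existence of a separating sphere is used, and it is cleaner than trying to argue tautness upstairs.
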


\begin{rem}
	If there does exist a sphere in $S^3\setminus L$ that separates $q$ and $r$, then $\HFKhat(K_{b+n};\Q)$ is grading-preserving isomorphic to $\HFKhat(K_b;\Q)$ for all $n \in \Z$ by the proof of \cite[Theorem 1.3]{wang2020cosmetic}.
\end{rem}

In sections~\ref{subsec:suturedPrelims}, \ref{subsec:surfaceDecomp}, and \ref{subsec:surgTriangle}, we explain a few basic features of sutured Floer theory with twisted coefficients that we use in section~\ref{subsec:bandSurgeries} to prove Theorem~\ref{thm:unboundedHFKhat}. In particular, section~\ref{subsec:surfaceDecomp} provides a direct summand relationship under nice surface decomposition and section~\ref{subsec:surgTriangle} provides a surgery exact triangle.

\subsection{Preliminaries}\label{subsec:suturedPrelims}

See \cite{Gab83,Gab87a,Juh06,Juh08,MR2653728} for the definitions of balanced sutured manifolds, nice surface decompositions, and sutured Floer homology. 
The basic features of sutured Floer theory with twisted coefficients follow from straightforward adaptations of the usual constructions and arguments in sutured Floer homology and Heegaard Floer homology. 
We work over the field of rational numbers $\Q$ because Dowlin's spectral sequence \cite{dowlin2018spectral} is defined over $\Q$, and we only consider the case of coefficients in $\Q[\Z/n] = \Q[T]/(T^n - 1)$ where the ``twisting'' is with respect to a relative first homology class of the sutured manifold. 

The following definition is a straightforward generalization of twisted coefficients in Heegaard Floer homology \cite[Section 8]{MR2113020}. It is basically a special case of the construction in \cite[Section 4.1]{juhasz2020transverse}. Also see \cite[Section 3]{MR2377279} and \cite[Section 2.1]{MR3294567}. For systems of orientations for sutured Floer homology, see \cite[section 5]{MR3412088}. 

\begin{df}
	Let $(\Sigma,\bo\alpha,\bo\beta)$ be an admissible balanced diagram for a balanced sutured manifold $(M,\gamma)$, and let $\omega = \sum k_i c_i$ be a finite formal sum of properly embedded oriented curves $c_i$ on $\Sigma$ with integer coefficients $k_i$. Each $c_i$ is required to intersect the $\alpha$- and $\beta$-curves transversely and to be disjoint from every intersection point of the $\alpha$- and $\beta$-curves. Note that $\omega$ represents a class $\zeta = [\omega] \in H_1(M,\partial M)$, and every class in $H_1(M,\partial M)$ is represented by such a relative $1$-cycle on $\Sigma$. 

	Let $\underline{\SFC}(\Sigma,\bo\alpha,\bo\beta;\Q[\Z/n]_\omega)$ be the free $\Q[T]/(T^n - 1)$-module with basis $\mathbf{T}_\alpha \cap \mathbf{T}_\beta$ equipped with the $T$-equivariant differential \[
		\partial \mathbf{x} = \sum_{\mathbf{y} \in \mathbf{T}_\alpha \cap \mathbf{T}_\beta} \sum_{\substack{\phi \in \pi_2(\mathbf{x},\mathbf{y})\\\mu(\phi) = 1}} \# \widehat{\sr M}(\phi) \:T^{\:\omega\cdot\partial_\beta D(\phi)} \cdot \mathbf{y}
	\]for $\mathbf{x} \in \mathbf{T}_\beta \cap \mathbf{T}_\beta$ defined with respect to a suitable family of almost complex structures. The integer $\omega \cdot \partial_\beta D(\phi)$ is the algebraic intersection number between $\omega$ and an oriented multi-arc $\partial_\beta D(\phi)$, which is defined as follows. A Whitney disc $\phi \in \pi_2(\mathbf{x},\mathbf{y})$ has an associated $2$-chain on $\Sigma$ called its domain $D(\phi)$, and $\partial_\beta D(\phi)$ is defined to be the part of $\partial D(\phi)$ lying in the $\beta$-circles, thought of as an oriented multi-arc from $\mathbf{x}$ to $\mathbf{y}$. The count $\# \smash{\widehat{\sr M}}(\phi)$ is taken with signs with respect to a system of orientations $\fk{o}$ which we suppress from notation. 

	Let $\underline{\SFH}(M,\gamma;\Q[\Z/n]_\zeta)$ denote the homology of $\underline{\SFC}(\Sigma,\bo\alpha,\bo\beta;\Q[\Z/n]_\omega)$. In this paper, we view this homology group as simply a vector space over $\Q$ without additional structure. Up to isomorphism, this vector space depends only on $(M,\gamma)$, the class $\zeta = [\omega] \in H_1(M,\partial M)$, the integer $n \ge 1$, and the weak equivalence class \cite[section 2]{MR2812456} of $\fk{o}$. 
\end{df}
\begin{rem}
	In similar contexts, the formula $\omega\cdot\partial_\alpha D(\phi)$ is used instead of $\omega\cdot \partial_\beta D(\phi)$, though the choice is inconsequential. We use $\omega\cdot \partial_\beta D(\phi)$ to match the conventions in the proof of \cite[Theorem 3.1]{MR2377279} when establishing the surgery exact triangle in Section~\ref{subsec:surgTriangle}.
\end{rem}

\begin{rem}\label{rem:trivialCasesTwistedCoeff}
	If $n = 1$, note that $\underline{\SFH}(M,\gamma;\Q[\Z/n]_\zeta) = \SFH(M,\gamma;\Q)$, which is independent of $\zeta$. Also note that if $\zeta = 0$, then $\underline{\SFH}(M,\gamma;\Q[\Z/n]_\zeta) \cong \bigoplus^n \SFH(M,\gamma;\Q)$. 
\end{rem}

\subsection{Surface decompositions}\label{subsec:surfaceDecomp}

The next proposition is a straightforward adaptation of \cite[Theorem 1.1]{MR3294567} which itself is a generalization of \cite[Theorem 1.3]{Juh08}. We highlight the main ingredients in the proof. 

\begin{prop}\label{prop:twistedDirectSummandSurfDecomp}
	Let $(M,\gamma) \overset{S}{\rightsquigarrow} (M',\gamma')$ be a nice surface decomposition of balanced sutured manifolds. Let $i_*\colon H_1(M,\partial M) \to H_1(M, (\partial M) \cup S) \cong H_1(M',\partial M')$ be the map induced by the inclusion map $i\colon (M,\partial M) \to (M,(\partial M) \cup S)$. Then $\underline{\SFH}(M',\gamma';\Q[\Z/n]_{i_*(\zeta)})$, defined with respect to an induced system of orientations, is a direct summand of $\underline{\SFH}(M,\gamma;\Q[\Z/n]_\zeta)$ for any $\zeta \in H_1(M,\partial M)$. 
\end{prop}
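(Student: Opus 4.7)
The plan is to imitate Juh\'asz's original direct summand argument from \cite{Juh08}, together with its twisted-coefficient refinement in \cite{MR3294567}, while being careful about how the twisting cycle $\omega$ interacts with the surface decomposition. First I would pick a balanced diagram $(\Sigma,\bo\alpha,\bo\beta)$ for $(M,\gamma)$ that is \emph{adapted to $S$} in Juh\'asz's sense: the decomposing surface $S$ is represented on $\Sigma$ by a disjoint union of polygonal regions $P \subset \Sigma$ whose boundary alternates between pieces of $\partial \Sigma$ and arcs of $\bo\alpha \cup \bo\beta$. The existence of such a diagram for a nice surface decomposition is the content of \cite[Proposition 4.4]{Juh08}. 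The intersection points $\bo T_\alpha \cap \bo T_\beta$ then split canonically into ``outer'' points (those lying on the correct side of $S$) and ``non-outer'' points, and the outer points are in natural bijection with $\bo T_{\alpha'} \cap \bo T_{\beta'}$ for the induced diagram $(\Sigma', \bo\alpha', \bo\beta')$ of $(M',\gamma')$ obtained by cutting along $P$.

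The next step is to verify, exactly as in \cite[Section 6]{Juh08}, that for a nice surface decomposition any Whitney disc $\phi$ with one endpoint outer and the other non-outer has a domain whose local multiplicity forces a negative coefficient somewhere, and is therefore not holomorphic. This implies that the $\Q$-span of the outer generators is both a subcomplex and a direct summand of $\SFC(\Sigma,\bo\alpha,\bo\beta;\Q)$, and that the resulting subcomplex agrees with $\SFC(\Sigma',\bo\alpha',\bo\beta';\Q)$ generator-for-generator and differential-for-differential. None of this part changes for twisted coefficients.

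To handle the twisting, I would represent $\zeta \in H_1(M,\partial M)$ by a relative $1$-cycle $\omega$ on $\Sigma$ that is disjoint from a neighborhood of $P$. This is possible because the inclusion-induced map $i_*\colon H_1(M,\partial M) \to H_1(M,(\partial M)\cup S)$ amounts, on the level of diagrams, to discarding the polygonal region; any class in $H_1(M,\partial M)$ admits a representative disjoint from $P$, and that representative also represents $i_*(\zeta) \in H_1(M',\partial M')$ under the identification $H_1(M,(\partial M)\cup S) \cong H_1(M',\partial M')$. With $\omega$ chosen this way, any Whitney disc $\phi$ connecting two outer generators has $\partial_\beta D(\phi)$ supported away from $P$, so the intersection number $\omega \cdot \partial_\beta D(\phi)$ computed in $(\Sigma,\bo\alpha,\bo\beta)$ coincides with the one computed in $(\Sigma',\bo\alpha',\bo\beta')$. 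Thus the subcomplex spanned by outer generators is precisely $\underline{\SFC}(\Sigma',\bo\alpha',\bo\beta';\Q[\Z/n]_\omega)$ as a chain complex over $\Q[T]/(T^n-1)$, and passing to homology gives the claimed direct summand.

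The main delicate point will be the compatibility of orientation systems. I would take the system of orientations on $(\Sigma',\bo\alpha',\bo\beta')$ to be the one induced by restriction from a fixed system on $(\Sigma,\bo\alpha,\bo\beta)$, and invoke \cite[Section 5]{MR3412088} to see that any two such induced systems are weakly equivalent and hence give isomorphic twisted sutured Floer homologies; admissibility is preserved by a standard isotopy/finger-move argument that can be performed away from $P$, so it is compatible with the choice of $\omega$ above. Once these bookkeeping issues are settled, the direct summand assertion is essentially automatic from the untwisted case.
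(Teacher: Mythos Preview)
Your proposal is correct and follows essentially the same approach as the paper's proof sketch: use Juh\'asz's surface diagram adapted to $S$ to identify the outer generators with a direct summand of the complex for $(M,\gamma)$, and then arrange the twisting cycle $\omega$ so that intersection numbers are preserved under this identification. The paper is terser and simply invokes \cite[Lemma 2.6]{MR3294567} for the compatibility of the twisting cycle with the surface decomposition, which is precisely the content of your third paragraph.
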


\begin{proof}[Proof sketch]
	The surface may be assumed to be a good decomposing surface, and there exists a nice admissible good surface diagram adapted to it \cite[proof of Theorem 1.3]{Juh08}. In particular, we are provided with admissible balanced diagrams for $(M,\gamma)$ and $(M',\gamma')$ and an identification between the sutured Floer complex for $(M',\gamma')$ with a direct summand of the sutured Floer complex for $(M,\gamma)$. The identification is at the level of generators and discs with orientations. By \cite[Lemma 2.6]{MR3294567}, a relative $1$-cycle $\omega$ representing $\zeta$ can be chosen so that it induces a relative $1$-cycle $\omega'$ representing $i_*(\zeta)$ in such a way that the above identification of discs intertwines algebraic intersection number with $\omega$ and $\omega'$. 
\end{proof}

Similarly, the analogous adaption of \cite[Lemma 9.13]{Juh06} is the following result. 

\begin{prop}\label{prop:productDiscDecomp}
	Let $(M,\gamma) \overset{D}{\rightsquigarrow} (M',\gamma')$ be a product disc decomposition of balanced sutured manifolds. Then the direct summand relationship of Proposition~\ref{prop:twistedDirectSummandSurfDecomp} is an isomorphism $\underline{\SFH}(M,\gamma;\Q[\Z/n]_\zeta) \cong \underline{\SFH}(M',\gamma';\Q[\Z/n]_{i_*(\zeta)})$ for any $\zeta \in H_1(M,\partial M)$.
\end{prop}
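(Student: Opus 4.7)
The plan is to adapt Juhász's proof of \cite[Lemma 9.13]{Juh06} to the twisted setting, upgrading the direct summand inclusion of Proposition~\ref{prop:twistedDirectSummandSurfDecomp} to a bona fide isomorphism in the special case of a product disc. Juhász's argument produces an admissible balanced diagram $(\Sigma, \bo\alpha, \bo\beta)$ for $(M,\gamma)$ adapted to $D$ in which cutting along a single arc $\delta \subset \Sigma$ — disjoint from all $\alpha$- and $\beta$-curves — yields an admissible balanced diagram $(\Sigma', \bo\alpha', \bo\beta')$ for $(M',\gamma')$. In particular, the generator sets $\mathbf{T}_\alpha \cap \mathbf{T}_\beta$ and $\mathbf{T}_{\alpha'} \cap \mathbf{T}_{\beta'}$ are in canonical bijection, and every domain $D(\phi)$ appearing in the differential lies in $\Sigma \setminus \delta$ and hence descends to a domain $D(\phi')$ on $\Sigma'$, inducing a bijection $\phi \leftrightarrow \phi'$ of the contributing Whitney discs.

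First I would reproduce Juhász's adapted-diagram construction and record the bijections on generators and domains. Next, following the proof sketch of Proposition~\ref{prop:twistedDirectSummandSurfDecomp} and the adaptation of \cite[Lemma 2.6]{MR3294567}, I would choose a relative $1$-cycle $\omega$ on $\Sigma$ representing $\zeta$ that is disjoint from $\delta$; its restriction $\omega'$ to $\Sigma'$ then represents $i_*(\zeta) \in H_1(M',\partial M')$. Since every $\partial_\beta D(\phi)$ lies away from $\delta$ and $\omega$ agrees with $\omega'$ away from $\delta$, the algebraic intersection numbers satisfy $\omega \cdot \partial_\beta D(\phi) = \omega' \cdot \partial_{\beta'} D(\phi')$ under the bijection of discs. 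Combined with the compatibility of the induced system of orientations, this promotes the bijection of generators to a $\Q[T]/(T^n-1)$-linear chain isomorphism
\[
    \underline{\SFC}(\Sigma, \bo\alpha, \bo\beta; \Q[\Z/n]_\omega) \iso \underline{\SFC}(\Sigma', \bo\alpha', \bo\beta'; \Q[\Z/n]_{\omega'}),
\]
whose induced map on homology is the claimed isomorphism.

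The main obstacle I anticipate is verifying that $\omega$ can indeed be chosen disjoint from the cutting arc $\delta$ while still representing the prescribed class $\zeta$; this is the twisted analogue of the combinatorial choice in \cite[Lemma 2.6]{MR3294567}. Since $\delta$ avoids all attaching circles and traverses $\Sigma$ in a region meeting $\partial\Sigma$ in only two points, any representative $\omega$ can be isotoped off of $\delta$ without changing its homology class, so this obstacle should resolve via a routine transversality argument.
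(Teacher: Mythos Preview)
Your proposal is correct and follows exactly the route the paper indicates: the paper does not give an explicit proof but simply states that the result is ``the analogous adaptation of \cite[Lemma 9.13]{Juh06},'' and your outline---using Juh\'asz's adapted diagram in which the cutting arc $\delta$ misses all attaching circles, together with the choice of $\omega$ from \cite[Lemma 2.6]{MR3294567} as in the proof sketch of Proposition~\ref{prop:twistedDirectSummandSurfDecomp}---is precisely that adaptation. One small remark: your final paragraph's ``routine transversality argument'' for pushing $\omega$ off $\delta$ is better replaced by a direct appeal to \cite[Lemma 2.6]{MR3294567}, which already handles this for arbitrary nice surface decompositions and hence for product discs in particular.
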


\begin{lem}\label{lem:twistedProductSutMfldComputation}
	Let $(M,\gamma)$ be a balanced product sutured manifold, and let $\zeta \in H_1(M,\partial M)$. Then for any system of orientations, \[
		\dim_\Q \underline{\SFH}(M,\gamma;\Q[\Z/n]_\zeta) = n.
	\]
\end{lem}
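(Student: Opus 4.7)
The plan is to reduce the computation to the trivial case of a 3-ball with one suture and no twisting, by repeatedly performing product disc decompositions. Write $(M,\gamma) = (\Sigma \times I, \partial\Sigma \times \{1/2\})$ for some compact connected oriented surface $\Sigma$ with nonempty boundary, and induct on $-\chi(\Sigma)$, aiming to show that $\dim_\Q \underline{\SFH}(M,\gamma;\Q[\Z/n]_\zeta) = n$ for every $\zeta \in H_1(M,\partial M)$ and every weak equivalence class of orientation systems.

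For the base case $\Sigma = D^2$, the manifold $(M,\gamma)$ is a 3-ball with a single meridional suture, and $\SFH(M,\gamma;\Q) \cong \Q$ is standard. Here $H_1(M,\partial M) = H_1(B^3,S^2) = 0$ forces $\zeta = 0$, so Remark~\ref{rem:trivialCasesTwistedCoeff} delivers $\underline{\SFH}(M,\gamma;\Q[\Z/n]_0) \cong \Q^n$, of dimension $n$.

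For the inductive step with $\Sigma \neq D^2$, I produce a non-separating properly embedded arc $\alpha \subset \Sigma$ by a routine surface-topology argument (take $\alpha$ running between two boundary components if $|\partial\Sigma| \geq 2$, and otherwise take $\alpha$ crossing a handle). Then $\alpha \times I$ is a product disc in $M$, and decomposing along it yields a balanced product sutured manifold $(M',\gamma') = (\Sigma' \times I, \partial\Sigma' \times \{1/2\})$ with $\Sigma'$ connected and $-\chi(\Sigma') = -\chi(\Sigma) - 1$. Proposition~\ref{prop:productDiscDecomp} supplies an isomorphism
\[
    \underline{\SFH}(M,\gamma;\Q[\Z/n]_\zeta) \cong \underline{\SFH}(M',\gamma';\Q[\Z/n]_{i_*\zeta}),
\]
and the inductive hypothesis finishes the proof.

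Because Proposition~\ref{prop:productDiscDecomp} and Remark~\ref{rem:trivialCasesTwistedCoeff} already do the heavy lifting, there is no real obstacle to this lemma; the only points requiring attention are ensuring that $\Sigma'$ remains connected at each step (which is why we choose $\alpha$ non-separating) and noting that the proof needs only the $\Q$-dimension of the target, so that the precise identification of $i_*\zeta$ under the inclusion $H_1(M,\partial M) \to H_1(M',\partial M')$ is immaterial.
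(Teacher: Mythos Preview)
Your argument is correct, but it takes a different route from the paper. The paper's proof is a one-liner: by \cite[Proposition 9.4]{Juh06}, a balanced product sutured manifold admits an admissible balanced diagram with $|\mathbf{T}_\alpha \cap \mathbf{T}_\beta| = 1$, so the twisted chain complex is literally the free $\Q[\Z/n]$-module on a single generator with zero differential, and its $\Q$-dimension is $n$. No induction, no decomposition, and the class $\zeta$ never enters. Your approach instead leans on Proposition~\ref{prop:productDiscDecomp} to peel off product discs until you reach a ball, where $H_1(M,\partial M)=0$ forces $\zeta=0$ and Remark~\ref{rem:trivialCasesTwistedCoeff} applies. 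This is sound, and your observation that the inductive hypothesis makes the precise value of $i_*\zeta$ irrelevant is exactly right. The trade-off is that you are invoking a nontrivial invariance result (the product-disc isomorphism for twisted coefficients) to prove something that follows immediately from the chain-level definition; the paper's argument is more economical and logically prior. One small point: you assume $\Sigma$ is connected, whereas a balanced product sutured manifold need not be. This is harmless---either run your induction component by component and use the $\Q[\Z/n]$-tensor product formula for disjoint unions, or note that the single-generator diagram argument handles the disconnected case directly---but it is worth a sentence.
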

\begin{proof}
	Following \cite[Proposition 9.4]{Juh06}, there is an admissible balanced diagram $(\Sigma,\bo\alpha,\bo\beta)$ for $(M,\gamma)$ for which $|\textbf{T}_\alpha \cap \textbf{T}_\beta| = 1$, so $\dim_\Q \underline{\SFH}(M,\gamma;\Q[\Z/n]_\zeta) = \dim_\Q \Q[\Z/n] = n$. 
\end{proof}

\begin{cor}\label{cor:dimInequalTaut}
	Let $(M,\gamma)$ be a taut balanced sutured manifold, and let $\zeta \in H_1(M,\partial M)$. Then for any system of orientations,\[
		\dim_\Q \underline{\SFH}(M,\gamma;\Q[\Z/n]_\zeta) \ge n.
	\]
\end{cor}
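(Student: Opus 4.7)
The plan is to mimic Juh\'asz's proof of the non-vanishing of $\SFH$ for taut balanced sutured manifolds in this twisted setting: reduce via a sutured manifold hierarchy to a product sutured manifold at the bottom, where Lemma~\ref{lem:twistedProductSutMfldComputation} supplies the desired dimension, and use Proposition~\ref{prop:twistedDirectSummandSurfDecomp} at each step to transport a direct summand relationship back to $(M,\gamma)$.

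First I would invoke Gabai's sutured manifold hierarchy theorem \cite{Gab83}, refined by Juh\'asz \cite{Juh08} so that each decomposing surface is of the nice type required, to obtain a finite sequence of nice surface decompositions
\[
(M,\gamma) = (M_0,\gamma_0) \overset{S_1}{\rightsquigarrow} (M_1,\gamma_1) \overset{S_2}{\rightsquigarrow} \cdots \overset{S_k}{\rightsquigarrow} (M_k,\gamma_k)
\]
terminating in a (possibly disconnected) product sutured manifold. Setting $\zeta_0 = \zeta$ and letting $\zeta_{i+1} \in H_1(M_{i+1},\partial M_{i+1})$ be the image of $\zeta_i$ under the inclusion-induced map from Proposition~\ref{prop:twistedDirectSummandSurfDecomp}, iterated application of that proposition (with induced systems of orientations at each stage) shows that $\underline{\SFH}(M_k,\gamma_k;\Q[\Z/n]_{\zeta_k})$ is a direct summand of $\underline{\SFH}(M,\gamma;\Q[\Z/n]_\zeta)$.

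Finally I would compute $\dim_\Q \underline{\SFH}(M_k,\gamma_k;\Q[\Z/n]_{\zeta_k}) = n$. When $M_k$ is connected this is immediate from Lemma~\ref{lem:twistedProductSutMfldComputation}; for a disjoint union of product sutured manifolds, the same observation applies, since a disjoint union of the single-generator Heegaard diagrams supplied by \cite[Proposition 9.4]{Juh06} still satisfies $|\mathbf{T}_\alpha \cap \mathbf{T}_\beta| = 1$, so the twisted complex is free of rank one over $\Q[\Z/n]$ with zero differential. Combined with the direct summand relation this delivers $\dim_\Q \underline{\SFH}(M,\gamma;\Q[\Z/n]_\zeta) \ge n$, as claimed. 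The main obstacle is confirming that the hierarchy can be chosen consisting of surfaces to which Proposition~\ref{prop:twistedDirectSummandSurfDecomp} applies; however this compatibility is precisely what Juh\'asz established in the untwisted setting in \cite{Juh08}, and it carries over to the twisted setting by the same adaptation outlined in the proof sketch of Proposition~\ref{prop:twistedDirectSummandSurfDecomp}.
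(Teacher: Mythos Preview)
Your proposal is correct and follows essentially the same route as the paper: invoke the Gabai--Juh\'asz hierarchy of nice surface decompositions terminating in a product sutured manifold, apply Proposition~\ref{prop:twistedDirectSummandSurfDecomp} iteratively to obtain the direct summand relation, and then use Lemma~\ref{lem:twistedProductSutMfldComputation} to compute the bottom term. Your added remark on the disconnected case is a harmless elaboration, since the single-generator diagram argument behind Lemma~\ref{lem:twistedProductSutMfldComputation} already covers it.
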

\begin{proof}
	Just as in \cite[Theorem 1.4]{Juh08}, there is a sequence of nice surface decompositions\[
		(M,\gamma) \overset{S_1}{\rightsquigarrow} (M_1,\gamma_1) \overset{S_2}{\rightsquigarrow} \cdots \overset{S_n}{\rightsquigarrow} (M_m,\gamma_m)
	\]where $(M_m,\gamma_m)$ is a balanced product sutured manifold by \cite[Theorem 4.2]{Gab83} and \cite[Theorem 8.2]{Juh08}. The result then follows from Proposition~\ref{prop:twistedDirectSummandSurfDecomp} and Lemma~\ref{lem:twistedProductSutMfldComputation}. 
\end{proof}
\begin{rem}
	The conclusion of Corollary~\ref{cor:dimInequalTaut} can fail when $(M,\gamma)$ is not taut. For example, if $\zeta$ is a generator of the relative first homology group of $S^3(2)$, then by direct computation, $\dim_\Q \underline{\SFH}(S^3(2);\Q[\Z/n]_\zeta) = 2$ for all $n \ge 1$ for the standard choice of system of orientations. 
\end{rem}

\subsection{A surgery exact triangle}\label{subsec:surgTriangle}

Let $(M,\gamma)$ be a balanced sutured manifold, and let $K \subset \Int(M)$ be a knot. An \textit{integer}-framing $\lambda$ of $K$ is a framing that intersects the meridian $\mu$ of $K$ once. Let $M_\lambda(K)$ and $M_{\lambda+n\mu}(K)$ denote the $3$-manifolds obtained by Dehn surgery along $K$ with integer-framings $\lambda$ and $\lambda + n\mu$, respectively. Note that $(M_\lambda(K),\gamma)$ and $(M_{\lambda+n\mu}(K),\gamma)$ are balanced sutured manifolds, using the identifications $\partial M = \partial M_\lambda(K) = \partial M_{\lambda+n\mu}(K)$. The following result is an adaptation of \cite[Theorem 3.1]{MR2377279}. Remark~\ref{rem:relationshipSurgeryTriangles} clarifies the relationship between these two results.

\begin{prop}\label{prop:surgeryTriangleTwisted}
	Let $K$ be a knot in a balanced sutured manifold $(M,\gamma)$ with integer-framing $\lambda$. For any $n \ge 1$ and any systems of orientations, there is an exact triangle \[
		\begin{tikzcd}[column sep = -3em]
			\SFH(M_\lambda(K),\gamma;\Q) \ar[rr] & & \SFH(M_{\lambda+n\mu}(K),\gamma;\Q) \ar[dl]\\
			& \underline{\SFH}(M,\gamma;\Q[\Z/n]_\zeta) \ar[ul] &
		\end{tikzcd}
	\]where $\zeta \in H_1(M,\partial M)$ is the class represented by $K$. 
\end{prop}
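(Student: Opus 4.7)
The plan is to adapt the proof of \cite[Theorem 3.1]{MR2377279} from the closed three-manifold setting to sutured Floer homology, using a holomorphic triangle argument together with the twisted-coefficient machinery developed in the previous subsections.

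I would first construct an admissible sutured Heegaard triple-diagram $(\Sigma, \bo\alpha, \bo\beta, \bo\gamma)$ adapted to $K$. The Heegaard surface $\Sigma$ should contain a punctured torus $T$ sitting on $\partial\nu(K)$, the curves $\bo\beta$ and $\bo\gamma$ should agree outside $T$ up to small Hamiltonian isotopy, and on $T$ we should have $\beta_1 = \lambda$ and $\gamma_1 = \lambda + n\mu$, which meet in $n$ transverse points. A standard handleslide-and-wind argument makes both diagrams admissible and shows that $(\Sigma, \bo\alpha, \bo\beta)$ is a balanced diagram for $(M_\lambda(K), \gamma)$ and $(\Sigma, \bo\alpha, \bo\gamma)$ is a balanced diagram for $(M_{\lambda + n\mu}(K), \gamma)$. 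I would also fix a relative $1$-cycle $\omega$ on $\Sigma$ representing $\zeta = [K] \in H_1(M, \partial M)$, placed in general position with respect to all three sets of attaching circles.

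Next I would identify the third Floer complex $\SFC(\Sigma, \bo\beta, \bo\gamma)$ with $\underline{\SFC}(M, \gamma; \Q[\Z/n]_\omega)$ (up to quasi-isomorphism) and define the maps in the surgery triangle by counting pseudoholomorphic triangles in the triple diagram, parallel to the construction of the closed surgery triangle. The generators of $\SFC(\Sigma, \bo\beta, \bo\gamma)$ are indexed by the $n$ intersection points of $\beta_1$ and $\gamma_1$ together with the nearest-point intersections of the Hamiltonian-perturbed pairs $(\beta_i, \gamma_i)$ for $i \ge 2$, and the $n$ points on $T$ can be labeled by $\Z/n$ according to how many times a path between them crosses the meridional disc of $K$. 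This labeling matches the $\Q[\Z/n]$-module structure since the $T$-power for a differential or triangle count is exactly $\omega\cdot\partial_\beta D(\psi) \bmod n$. The first map in the triangle is then induced by triangle counts against a distinguished top-degree chain $\Theta_{\beta\gamma}$, the second map by a dual count, and the null-homotopy of the composition comes from a standard degeneration of holomorphic rectangles.

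The main obstacle is the compatibility of the twisting and sign bookkeeping across the triple diagram. One must extend $\omega$ and choose systems of orientations on each of the three pairs of attaching circles so that the $T$-exponents $\omega\cdot\partial_\beta D(\psi)$ are additive under splicing of triangles and rectangles, in the same spirit as \cite[Lemma 2.6]{MR3294567} is used in the proof of Proposition~\ref{prop:twistedDirectSummandSurfDecomp}, and so that all signs match up. Once this bookkeeping is settled, the exactness of the triangle follows from the standard holomorphic triangle/rectangle argument as in \cite[Section 3]{MR2377279}.
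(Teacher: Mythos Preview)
There is a genuine gap in your setup: the complex $\SFC(\Sigma,\bo\beta,\bo\gamma)$ is \emph{not} a model for $\underline{\SFC}(M,\gamma;\Q[\Z/n]_\omega)$. In your triple diagram the pairs $(\bo\alpha,\bo\beta)$ and $(\bo\alpha,\bo\gamma)$ describe the two surgered manifolds, but the pair $(\bo\beta,\bo\gamma)$ describes an auxiliary sutured manifold built from small Hamiltonian translates together with a single lens-space-type torus summand; it has nothing to do with $(M,\gamma)$. The fact that its generators are indexed by the $n$ points of $\beta_1\cap\gamma_1$ does not make it a free rank-$n$ module over the Floer complex of $M$: the $\bo\alpha$-curves, which carry all the topology of $M$, never enter that complex. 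The proposed identification therefore fails, and with it the construction of the third vertex of the triangle.

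What the paper (following \cite[Theorem~3.1]{MR2377279}) actually does is take \emph{four} sets of curves $\bo\alpha,\bo\beta,\bo\gamma,\bo\delta$, with $(\Sigma,\bo\alpha,\bo\beta)$ representing $(M,\gamma)$ itself (so $\beta_k$ is the meridian of $K$), while $(\Sigma,\bo\alpha,\bo\gamma)$ and $(\Sigma,\bo\alpha,\bo\delta)$ represent the $\lambda$- and $(\lambda+n\mu)$-surgeries. The twisted term $\underline{\SFC}(M,\gamma;\Q[\Z/n]_\omega)$ is then literally the $(\bo\alpha,\bo\beta)$-complex, with $\omega$ taken to be a translate of $\gamma_k$ meeting $\beta_k$ once, so that $\omega\cdot\partial_\beta D(\phi)=m_p(\phi)$ recovers the Ozsv\'ath--Szab\'o bookkeeping. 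The three maps $f_i$ are triangle counts against the canonical generators $\Theta_{\gamma\delta},\Theta_{\delta\beta},\Theta_{\beta\gamma}$, the $H_i$ are rectangle counts giving null-homotopies of $f_{i+1}\circ f_i$, and exactness follows from the triangle detection lemma applied with further translates $\bo\beta',\bo\gamma',\bo\delta'$ and pentagon counts. Your outline should be modified to introduce this missing set of attaching curves for $M$ and to place the twisting on the corresponding $(\bo\alpha,\bo\beta)$-complex rather than on the auxiliary $(\bo\beta,\bo\gamma)$-complex.
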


\begin{rem}\label{rem:relationshipSurgeryTriangles}
	Proposition~\ref{prop:surgeryTriangleTwisted} generalizes the statement of \cite[Theorem 3.1]{MR2377279} in two ways: the knot $K$ is no longer required to be nullhomologous, and the ambient $3$-manifold is an arbitrary balanced sutured manifold rather than a closed oriented $3$-manifold. Note though that Proposition~\ref{prop:surgeryTriangleTwisted} is for the ``hat'' version of Heegaard Floer homology, whereas \cite[Theorem 3.1]{MR2377279} is for the ``plus'' version. 

	Relaxing the condition that $K$ is nullhomologous is not actually a generalization of what Ozsv\'ath--Szab\'o proved. Their proof of \cite[Theorem 3.1]{MR2377279} may be viewed as having two steps. They first establish an exact triangle where one term has twisted coefficients without restriction on the homology class of $K$, which is essentially the triangle in Proposition~\ref{prop:surgeryTriangleTwisted}. Then, under the assumption that $K$ is nullhomologous, they compute this twisted coefficient group in terms of Floer homology groups with ordinary coefficients (also see Remark~\ref{rem:trivialCasesTwistedCoeff}). 

	Generalizing surgery triangles from closed $3$-manifolds to balanced sutured manifolds is standard and appears in the literature. For examples of such generalizations to sutured Floer homology, see \cite[Section 5.1]{MR3617634} for the surgery triangle and \cite[Sections 3-4]{MR2601010} for the link surgeries spectral sequence. For these reasons, we only give a sketch of the proof, with emphasis on the places were the proof differs, and refer to \cite{MR2377279,MR2601010} for more details. For the assertion about systems of orientations, see \cite[section 6.1, Lemma 6.6]{MR3412088}. See also \cite{MR2249248} for an exposition of the exact triangle in Heegaard Floer homology, which includes the triangle detection lemma \cite[Lemma 2.13]{MR2249248}. 
\end{rem}

\begin{proof}[Proof outline of Proposition~\ref{prop:surgeryTriangleTwisted}]
	There is a compact oriented surface $\Sigma$ together with four ordered sets $\bo\alpha,\bo\beta,\bo\gamma,\bo\delta$, each consisting of $k$ disjoint simple closed curves on $\Sigma$ such that \begin{itemize}[noitemsep]
		\item $(\Sigma,\bo\alpha,\bo\beta)$, $(\Sigma,\bo\alpha,\bo\gamma)$, $(\Sigma,\bo\alpha,\bo\delta)$ are balanced diagrams that represent $(M,\gamma)$, $(M_\lambda(K),\gamma)$, $(M_{\lambda+n\mu}(K),\gamma)$, respectively, 
		\item for $i = 1,\ldots,k-1$, the curves $\beta_i,\gamma_i$, $\delta_i$ are small isotopic translates, pairwise intersecting in two points transversely, and are disjoint from the other curves in $\bo\beta,\bo\gamma,\bo\delta$,
		\item there is a torus summand of $\Sigma$ (an embedding $T^2\setminus D^2 \hookrightarrow \Sigma$) which contains $\beta_k, \gamma_k, \delta_k$ and is disjoint from $\beta_i, \gamma_i, \delta_i$ for $i = 1,\ldots,k-1$ where \begin{itemize}[noitemsep]
			\item $\beta_k$ and $\gamma_k$ are coordinate factors on $T^2$, 
			\item $\delta_k$ represents the slope $\gamma_k + n\beta_k$ where $\gamma_k$ and $\beta_k$ are oriented so that $\gamma_k \cdot \beta_k = 1$,
			\item $D^2 \subset T^2$ does not lie in one of the two triangular regions of $T^2\setminus (\beta_k \cup \gamma_k \cup \delta_k)$.
		\end{itemize}
	\end{itemize}See \cite[Section 4]{MR2601010} for a proof of existence of such a configuration. 
	The condition that $D^2 \subset T^2$ does not lie in a triangular region is for admissibility of $(\Sigma,\bo\beta,\bo\gamma,\bo\delta)$. 
	Choose a point $p \in \beta_k$ that does not lie on any of the other curves, and let $\omega \subset \Sigma$ be an isotopic translate of $\gamma_k$ that intersects $\beta_k$ at $p$. Note that $\omega$ can be oriented so that with respect to the diagram $(\Sigma,\bo\alpha,\bo\beta)$ of $(M,\gamma)$, it represents the class $\zeta = [K] \in H_1(M,\partial M)$. For any Whitney disc $\phi$ of $(\Sigma,\bo\alpha,\bo\beta)$, observe that $\omega\cdot\partial_\beta D(\phi)$ is just $m_p(\phi)$, following the notation of \cite[Theorem 3.1]{MR2377279}. 

	There are canonical generators $\Theta_{\gamma\delta}, \Theta_{\delta\beta},\Theta_{\beta\gamma}$ lying in $\mathbf{T}_\gamma \cap \mathbf{T}_\delta, \mathbf{T}_\delta \cap \mathbf{T}_\beta, \mathbf{T}_\beta \cap \mathbf{T}_\gamma$, respectively. For each $i = 1,\ldots,k-1$, the curves $\gamma_i$ and $\delta_i$ intersect in two points $x_i,y_i$, labeled so that there are two bigon domains from $x_i$ to $y_i$ in $(\Sigma,\bo\gamma,\bo\delta)$. The canonical generator $\Theta_{\gamma\delta}$ contains $x_i$ for $i = 1,\ldots,k-1$. The analogous description holds for $\Theta_{\delta\beta}$ and $\Theta_{\beta\gamma}$. Since $\delta_k \cap \beta_k$ and $\beta_k \cap \gamma_k$ each consist of a single point, the generators $\Theta_{\delta\beta}$ and $\Theta_{\beta\gamma}$ are now determined. The generator $\Theta_{\gamma\delta}$ contains one of the $n$ points of $\gamma_k \cap \delta_k$ which can be specified by the relative $\Spin^c$ in which the generator lies (see \cite[Definition 3.2]{MR2377279}). 

	The rest of the proof is the same as the proof of \cite[Theorem 3.1]{MR2377279}. There are maps\[
		\begin{tikzcd}[column sep=-30pt,row sep=50pt]
			\SFC(\Sigma,\bo\alpha,\bo\gamma) \ar[rr,"f_1"] \ar[dr,swap,bend right=40pt,"H_1"] & & \SFC(\Sigma,\bo\alpha,\bo\delta) \ar[ld,"f_2"] \ar[ll,swap,bend right,"H_2"]\\
			& \SFC(\Sigma,\bo\alpha,\bo\beta;\F[\Z/n]_\omega) \ar[ul,"f_3"] \ar[ur,swap,bend right=40pt,"H_3"] &
		\end{tikzcd}
	\]where $H_i$ is a nullhomotopy for $f_{i + 1} \circ f_{i}$ for $i \in \Z/3$. The maps $f_i$ are defined by counts of pseudo-holomorphic triangles, and the maps $H_i$ are defined by counts of pseudo-holomorphic rectangles. The counts are made with suitable bookkeeping of the multiplicities of the domains at $p$. The triangle detection lemma is then applied, by choosing suitable translates $\bo\beta',\bo\gamma',\bo\delta'$ and counting pseudo-holomorphic rectangles and pentagons. 
\end{proof}

\subsection{Band surgeries}\label{subsec:bandSurgeries}

Let $L$ be an oriented link in $S^3$ with basepoints $q,r$ on distinct components. Let $b$ be an orientation-preserving band that merges the marked components of $L$, and let $K_b$ be the oriented link obtained from $L$ by band surgery along $b$. Let $K_{b+n}$ be obtained by adding $n$ full twists to the band. Let $C$ be the knot in the complement of $K_b$ that bounds a disc $D$ that intersects $b$ along a cocore and is otherwise disjoint from $K_b$. Our convention for a full twist is that $(-1/n)$-surgery along $C$ takes $K_b$ to $K_{b+n}$. 

Let $\kappa(L,q,r) \subset S^1 \x S^2$ be the link obtained from $K_b$ by doing $0$-surgery along $C$, and let $C'$ be the core of the surgery. Observe that the link $\kappa(L,q,r)$ is independent of the band $b$, and that the homology class of $C'$ in the exterior of $\kappa(L,q,r)$ is also independent of the band $b$. We note that $\kappa(L,q,r)$ is obtained from $L$ with its basepoints $q,r$ from a version of the ``knotification'' construction in \cite[Section 2]{MR2065507}. 

The dimension of the knot Floer homology $\HFKhat(K_{b+n};\Q)$ of the link $K_{b+n}$ may depend on the choice of system of orientations (see \cite{MR2812456}). However, there is a \textit{canonical} choice \cite[section 3]{MR3604486} which we use by default for all knot Floer homology groups in this paper because Dowlin's spectral sequence \cite{dowlin2018spectral} is defined with respect to this canonical choice. 

\begin{lem}\label{lem:skeinTriangleTwistedCoeff}
	For each $n \ge 1$, there is an exact triangle \[
		\begin{tikzcd}[column sep = -5em, row sep = large]
			\HFKhat(K_b;\Q) \ar[rr] & & \HFKhat(K_{b-n};\Q) \ar[dl]\\
			& \underline{\SFH}((S^1\x S^2)(\kappa(L,q,r));\Q[\Z/n]_{[C']}) \ar[ul] & 
		\end{tikzcd}
	\]where $(S^1 \x S^2)(\kappa(L,q,r))$ is the sutured exterior of $\kappa(L,q,r) \subset S^1 \x S^2$ equipped with any system of orientations. 
\end{lem}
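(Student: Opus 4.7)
The plan is to apply the twisted-coefficient surgery exact triangle (Proposition~\ref{prop:surgeryTriangleTwisted}) to the balanced sutured manifold $(M,\gamma) = ((S^1\x S^2)(\kappa(L,q,r)),\gamma)$ and to the knot $K = C' \subset \Int(M)$, the core of the $0$-surgery on $C$. The main content of the proof is choosing integer framings $\lambda$ and $\lambda + n\mu$ on $C'$ so that $M_\lambda(C') \cong S^3(K_b)$ and $M_{\lambda+n\mu}(C') \cong S^3(K_{b-n})$ as balanced sutured manifolds.

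To achieve this, I use the standard duality between $C$ and $C'$: the knot exterior $E = S^3 \setminus \nu(C)$ coincides with $(S^1\x S^2)\setminus \nu(C')$, with the meridian of $C' \subset S^1 \x S^2$ equal to the longitude $\lambda_C$ of $C \subset S^3$, and a longitude of $C'$ expressible as $\pm\mu_C$ modulo meridians. I choose the longitude $\lambda$ of $C'$ oriented so that $\lambda$-Dehn filling on $C'$ undoes the $0$-surgery on $C$ and returns $(S^3,K_b)$; since Dehn filling on a knot $C' \subset \Int(M)$ does not affect $\partial M$ or its sutures, this directly identifies $M_\lambda(C')$ with the sutured exterior $S^3(K_b)$. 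With the meridian $\mu$ of $C'$ oriented appropriately, the framing $\lambda + n\mu$ on $C'$ translates to slope $1/n$ on $C \subset S^3$. Since by convention $(-1/n)$-surgery on $C$ takes $K_b$ to $K_{b+n}$, this $1/n$-surgery takes $K_b$ to $K_{b-n}$, giving $M_{\lambda + n\mu}(C') \cong S^3(K_{b-n})$.

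Applying Proposition~\ref{prop:surgeryTriangleTwisted} with $\zeta = [C'] \in H_1(M,\partial M)$ then yields an exact triangle whose vertices are $\SFH(S^3(K_b);\Q)$, $\SFH(S^3(K_{b-n});\Q)$, and the twisted group $\underline{\SFH}((S^1\x S^2)(\kappa(L,q,r));\Q[\Z/n]_{[C']})$, valid for any chosen systems of orientations on the three sutured manifolds. Equipping $S^3(K_b)$ and $S^3(K_{b-n})$ with the canonical systems of orientations \cite[Section 3]{MR3604486} used by default in this paper, the first two terms become $\HFKhat(K_b;\Q)$ and $\HFKhat(K_{b-n};\Q)$, producing the triangle in the lemma. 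The main—really the only—nontrivial step is the slope bookkeeping identifying $M_{\lambda + n\mu}(C')$ with $S^3(K_{b-n})$; everything else is an immediate invocation of Proposition~\ref{prop:surgeryTriangleTwisted} and the defining identification $\SFH(S^3(\cdot);\Q) = \HFKhat(\cdot;\Q)$.
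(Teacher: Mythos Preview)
Your proof is correct and follows essentially the same route as the paper's: apply Proposition~\ref{prop:surgeryTriangleTwisted} to the sutured exterior of $\kappa(L,q,r)$ in $S^1\times S^2$ with framed knot $C'$, identify the framing $\lambda$ on $C'$ with the meridian of $C$ so that integral surgeries on $C'$ are the $1/n$-surgeries on $C$ (hence yield $K_b$ and $K_{b-n}$), and finish with the identification $\SFH(S^3(\cdot);\Q)=\HFKhat(\cdot;\Q)$. The paper's proof is more terse but invokes exactly the same ingredients; your slope bookkeeping for the $C/C'$ duality just spells out what the paper summarizes in one line.
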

\begin{proof}
	This is the exact triangle of Proposition~\ref{prop:surgeryTriangleTwisted} in the case that the sutured manifold is $(S^1 \x S^2)(\kappa(L,q,r))$ and the framed knot is $C'$ whose framing is given by the meridian of $C$. Integral surgeries along $C'$ are the $(1/n)$-surgeries along $C$, and our convention for the definition of a full twist is that $1/n$ surgery on $C$ takes $K_b$ to $K_{b-n}$. Finally, there is an identification $\HFKhat(K_b;\Q) = \SFH(S^3(K_b);\Q)$ given by \cite[Proposition 9.2]{Juh06}.
\end{proof}

\begin{proof}[Proof of Theorem~\ref{thm:unboundedHFKhat}]
	It suffices to consider the case $n \to -\infty$ because $\dim_\Q \HFKhat$ is invariant under mirroring. By exactness of the triangle of Lemma~\ref{lem:skeinTriangleTwistedCoeff}, it suffices to show that \[
		\dim_\Q \underline{\SFH}((S^1 \x S^2)(\kappa(L,q,r));\Q[\Z/n]_{[C']}) \to \infty \quad\text{ as }\quad n \to \infty.
	\]This criterion is independent of $b$. Observe that there is a nice surface decomposition \[
		(S^1 \x S^2)(\kappa(L,q,r)) \overset{A}{\rightsquigarrow} S^3(L)
	\]where $S^3(L)$ is the sutured exterior of $L \subset S^3$, for which $\zeta = i_*([C']) \in H_1(S^3(L),\partial S^3(L))$ is the relative homology class of the core of the band $b$. The surface $A$ is a product annulus, obtained in the following way. The disc $D$ that $C$ bounds in $S^3$ that intersects the band $b$ along a cocore may be viewed as a pair of pants in the exterior of $K_b \cup C$. After Dehn filling the toral boundary component corresponding to $C$, we cap off the pair of pants with a disc to obtain the annulus $A$. By Proposition~\ref{prop:twistedDirectSummandSurfDecomp}, it suffices to show that \[
		\dim_\Q \underline{\SFH}(S^3(L);\Q[\Z/n]_{\zeta}) \to \infty \quad\text{ as }\quad n \to \infty.
	\]We will show that $\dim_\Q \underline{\SFH}(S^3(L);\Q[\Z/n]_\zeta) \ge n$. 

	If every embedded $2$-sphere in the complement of $L$ bounds a ball, then $S^3(L)$ is a taut sutured manifold so $\dim_\Q \underline{\SFH}(S^3(L);\Q[\Z/n]_\zeta) \ge n$ by Corollary~\ref{cor:dimInequalTaut}. Otherwise, we use a split union formula to reduce to this case. Write $L$ as the split union of two nonempty links $L' \amalg L''$. Since no sphere in $S^3\setminus L$ separates $q$ and $r$, we may assume that both basepoints lie on $L'$. We may also assume that $S^3(L')$ is irreducible. By \cite[Proposition 9.15]{Juh06}, there is a product disc decomposition \[
		S^3(L) \overset{D}{\rightsquigarrow} S^3(L') \amalg S^3(L'')(1)
	\]so by Proposition~\ref{prop:productDiscDecomp}, we have an isomorphism \[
		\underline{\SFH}(S^3(L);\Q[\Z/n]_\zeta) \cong \underline{\SFH}(S^3(L') \amalg S^3(L'')(1);\Q[\Z/n]_{i_*(\zeta)}).
	\]Observe that \[
		i_*(\zeta) = \zeta' \oplus 0 \in H_1(S^3(L'),\partial S^3(L')) \oplus H_1(S^3(L'')(1),\partial S^3(L'')(1))
	\]where $\zeta'$ is the relative homology class of an arc whose basepoints lie on the toral boundary components of $S^3(L')$ marked by $p$ and $q$. It then follows from the definition of the sutured Floer chain complex with twisted coefficients and a suitably chosen balanced diagram that \[
		\underline{\SFH}(S^3(L') \amalg S^3(L'')(1);\Q[\Z/n]_{i_*(\zeta)}) \cong \underline{\SFH}(S^3(L');\Q[\Z/n]_{\zeta'}) \otimes_\Q \SFH(S^3(L'')(1);\Q).
	\]Since $S^3(L')$ is taut, it follows from Corollary~\ref{cor:dimInequalTaut} that $\dim_\Q \underline{\SFH}(S^3(L);\Q[\Z/n]_\zeta) \ge n$. 
\end{proof}

\section{$\sl(N)$ link homology}\label{sec:KRsplitLinkDetect}

In section~\ref{subsec:slNprelims}, we briefly describe the version of $\sl(N)$ link homology that we use. Its construction uses a cube of resolutions and Robert--Wagner's combinatorial evaluation of closed foams \cite{MR4164001}. We refer to \cite{wang2021sln} for a detailed exposition of the construction, and for the definitions of $\sl(N)$ MOY graphs and $\sl(N)$ foams. In section~\ref{subsec:slNsplitLinkDetection}, we prove the results stated in the introduction. 

\subsection{Preliminaries}\label{subsec:slNprelims}

Let $D$ be an oriented link diagram. If $c(D)$ denotes the set of crossings of $D$, then to each function $v\colon c(D) \to \{0,1\}$, we associate an $\sl(N)$ MOY graph $D_v$ according to Figure~\ref{fig:resolutions}. Each edge of $D_v$ carries an orientation and a label of either $1$ or $2$. If $v,w\colon c(D)\to\{0,1\}$ agree at all crossings except at one where $v(c) = 0$ and $w(c) = 1$, then there is an $\sl(N)$ foam $F_{vw}\colon D_v \to D_w$.

\begin{figure}[!ht]
	\labellist
	\pinlabel $1$ at 65 280
	\pinlabel $1$ at 505 280
	\pinlabel $0$ at 290 490
	\pinlabel $0$ at 290 55
	\pinlabel $2$ at 505 455
	\endlabellist
	\centering
	\includegraphics[width=.3\textwidth]{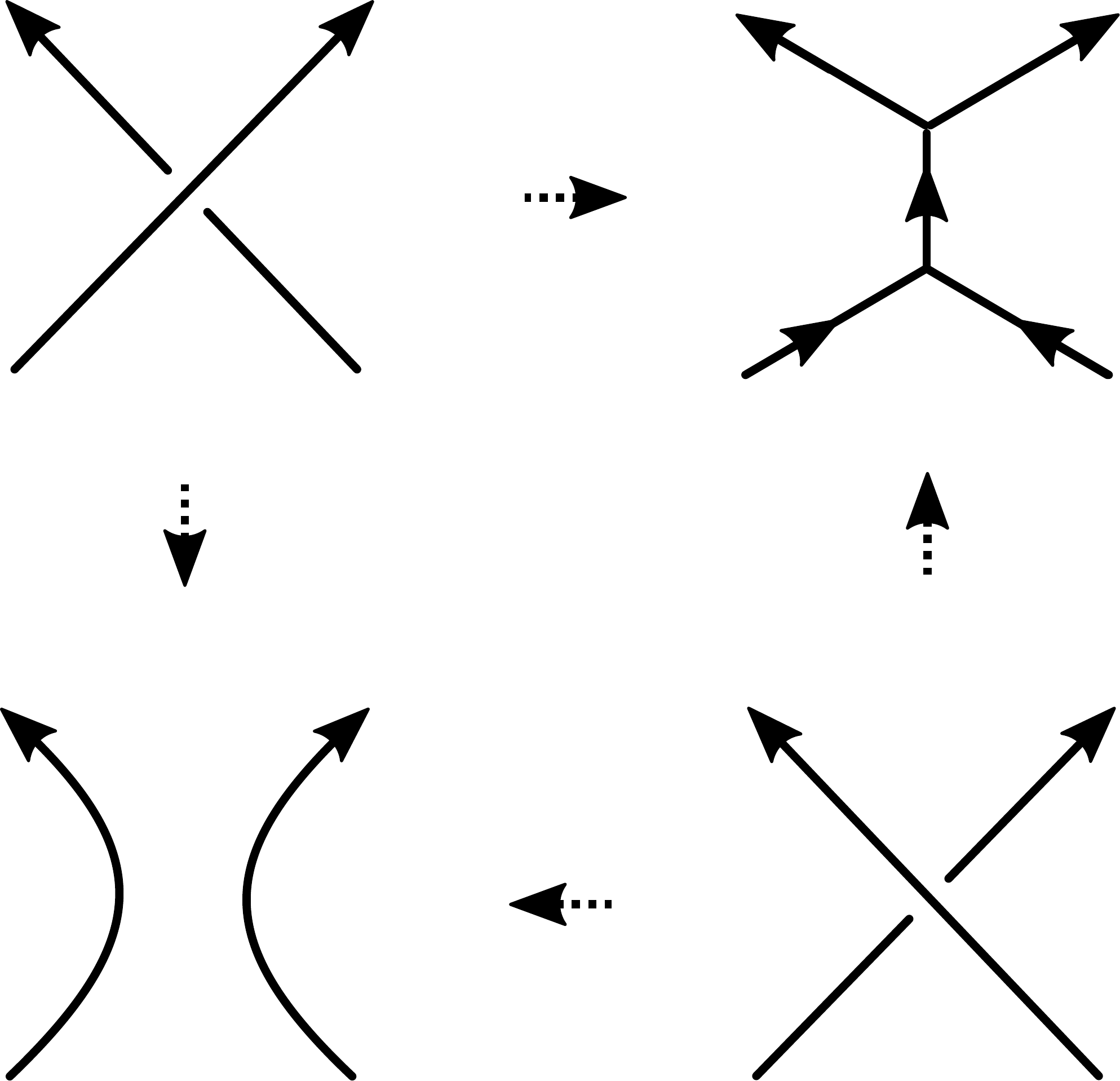}
	\captionsetup{width=.8\linewidth}
	\caption{MOY graphs obtained by resolving crossings of an oriented link diagram. An edge of an MOY graph without an explicit label is labeled $1$. The top left crossing is positive and the bottom right crossing is negative.}
	\label{fig:resolutions}
\end{figure}

Let $R$ be a ring. Associated to each $D_v$ is a free $R$-module $\sr F_N(D_v;R)$, and associated to each $F_{vw}\colon D_v\to D_w$ is an $R$-module map $\sr F_N(F_{vw};R) \colon \sr F_N(D_v;R) \to \sr F_N(D_w;R)$. The $\sl(N)$ chain complex $\KRC_N(D;R)$ is the direct sum of the $R$-modules $\sr F_N(D_v;R)$ with differential given by a signed sum of the maps $\sr F_N(F_{vw};R)$. The (unreduced) $\sl(N)$ link homology $\KR_N(L;R)$ of the link $L$ that $D$ represents is the homology of the chain complex $\KRC_N(D;R)$. There is an identification $\KRC_N(D;R) = \KRC_N(D;\Z) \otimes R$ so $\KR_N(L;R)$ satisfies the universal coefficient theorem. When $N = 2$, there is an identification $\KR_2(L;R) \cong \Kh(m(L);R)$ where $m(L)$ denotes the mirror of $L$. 

A basepoint $q \in D$ away from the crossings can be viewed as a basepoint on each $D_v$. The basepoint $q \in D_v$ induces a basepoint operator $X_q\colon \sr F_N(D_v;R) \to \sr F_N(D_v;R)$ satisfying $X_q^N = 0$ (see for example \cite[section 2.6]{wang2021sln}). We define $X_q$ on $\KRC_N(D;R)$ to be the sum of these maps defined on the individual $\sr F_N(D_v;R)$. This operator $X_q$ on $\KRC_N(D;R)$ is a chain map. The reduced $\sl(N)$ link homology $\rKR_N(L,q;R)$ of $L$ with respect to $q$ is defined to be the homology of the subcomplex $X_q^{N-1}\KRC_N(D;R)$. If $r \in D$ is an additional basepoint, then the associated operator $X_r\colon \KRC_N(D;R) \to \KRC_N(D;R)$ commutes with $X_q$ and induces a basepoint operator on $\rKR_N(L,q;R)$ satisfying $X_r^N = 0$. 

\vspace{5pt}

\begin{figure}[!ht]
	\centering
	\labellist
	\pinlabel $q$ at 0 35
	\pinlabel $\bullet$ at 15 12
	\pinlabel $q$ at 272 35
	\pinlabel $\bullet$ at 287 12
	\pinlabel $q$ at 544 35
	\pinlabel $\bullet$ at 559 8
	\pinlabel $q$ at 814 35
	\pinlabel $\bullet$ at 829 12
	\pinlabel $r$ at 185 35
	\pinlabel $\bullet$ at 170 12
	\pinlabel $r$ at 456 35
	\pinlabel $\bullet$ at 441 12
	\pinlabel $r$ at 729 35
	\pinlabel $\bullet$ at 714 8
	\pinlabel $r$ at 998 35
	\pinlabel $\bullet$ at 983 12
	\pinlabel $2$ at 660 90
	\pinlabel $D_+$ at 100 180
	\pinlabel $D_\ell$ at 365 180
	\pinlabel $D_s$ at 640 180
	\pinlabel $D_-$ at 915 180
	\endlabellist
	\includegraphics[width=.13\textwidth]{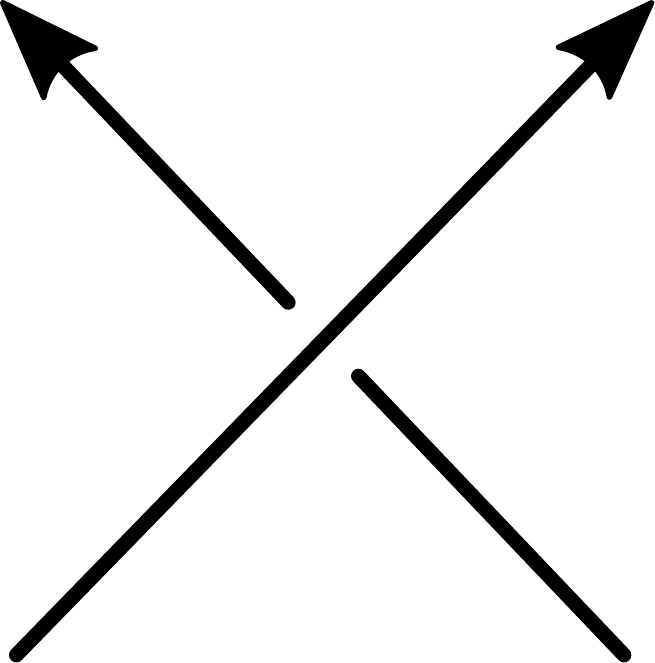}\hspace{20pt}
	\includegraphics[width=.13\textwidth]{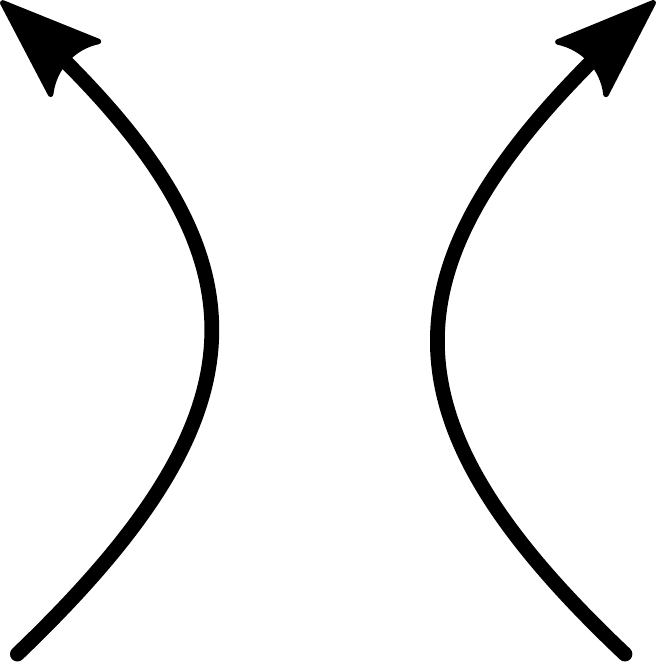}\hspace{20pt}
	\includegraphics[width=.13\textwidth]{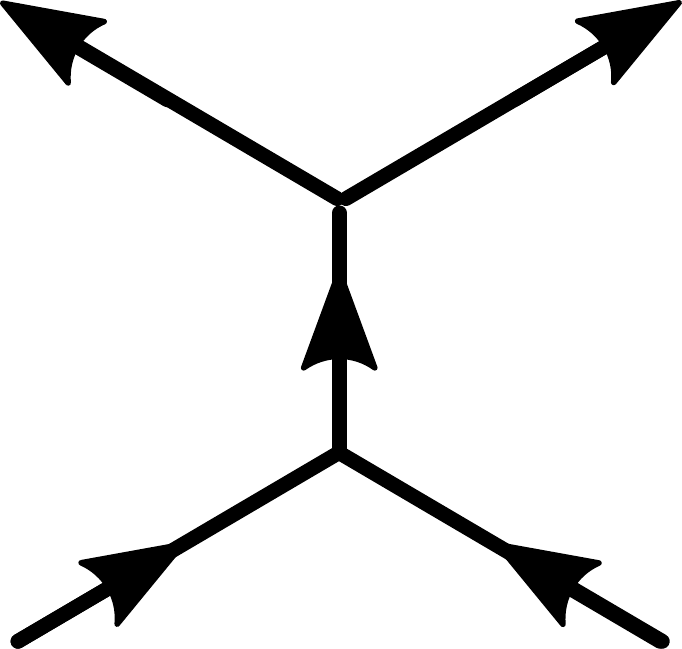}\hspace{20pt}
	\includegraphics[width=.13\textwidth]{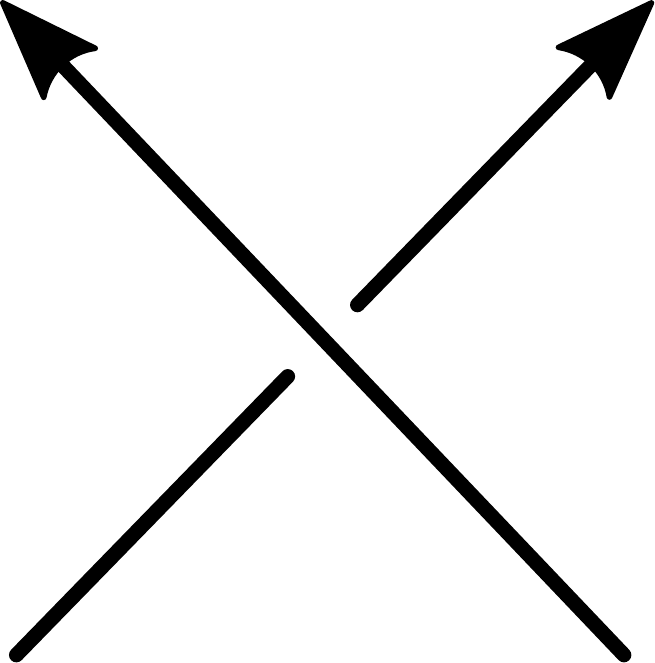}
	\caption{Local diagrams with basepoints for $D_+,D_\ell,D_s,D_-$}
	\label{fig:basepointsOnResolutions}
\end{figure}

Let $D_+$ be a diagram with a fixed positive crossing $c$ such that the two strands at $c$ lie on the same component of the link that $D_+$ represents. Let $D_\ell, D_s, D_-$ be the diagrams obtained by modifying $D_+$ near $c$ according to Figure~\ref{fig:basepointsOnResolutions}. Note that $D_\ell$ represents a link having one more component than the link that $D_+$ represents. We fix basepoints $q,r$ on each of the four diagrams as shown in the figure. 

We associate a chain complex $\KRC_N(D_s;R)$ to $D_s$ in the natural way by using a cube of resolutions. In particular, $\KRC_N(D_s;R)$ is a subcomplex of $\KRC_N(D_-;R)$ and a quotient complex of $\KRC_N(D_+;R)$. Just like the chain complex associated to an ordinary link diagram, a basepoint $q \in D_s$ away from the crossings and away from the edge labeled $2$ induces a chain map $X_q$ on $\KRC_N(D_s;R)$ satisfying $X_q^N = 0$. The edge labeled $2$, however, gives rise to two different basepoint operators on $\KRC_N(D_s;R)$: a weight 1 operator $E_1$ and a weight 2 operator $E_2$ (see for example \cite[section 2.6]{wang2021sln}). These operators commute and satisfy certain universal relations that we now explain. 

For a basepoint operator $X_q$ associated to an edge labeled $1$, the identity $\smash{X_q^N} = 0$ can be viewed as the assertion that the action of the polynomial ring $R[X]$ on $\KRC_N(D_s;R)$ given by $X = X_q$ descends to an action of $H^*(\CP^{N-1};R) = R[X]/X^N$ on $\KRC_N(D_s;R)$ (see for example \cite[Lemma 2.17]{wang2021sln}). Similarly, there is an action of the ring of symmetric polynomials $R[X_1,X_2]^{\fk S_2}$ on $\KRC_N(D_s;R)$ where the first elementary symmetric polynomial $e_1 = X_1 + X_2$ acts on $\KRC_N(D_s;R)$ by $E_1$ and the second elementary symmetric polynomial $e_2 = X_1X_2$ acts on $\KRC_N(D_s;R)$ by $E_2$. The cohomology of the Grassmannian $\G(2,N)$ of $2$-planes in $\C^N$ can be viewed as a quotient of $R[X_1,X_2]^{\fk S_2}$, where $e_1$ corresponds to a degree $2$ cohomology class and $e_2$ corresponds to a degree $4$ cohomology class. An argument similar to \cite[Lemma 2.17]{wang2021sln} shows that the action of $R[X_1,Y_2]^{\fk S_2}$ on $\KRC_N(D_s;R)$ descends to an action of $H^*(\G(2,N);R)$ on $\KRC_N(D_s;R)$ (see for example \cite[section 5]{wang2021sln}). Since $\G(2,N)$ has real dimension $4(N-2)$, it follows that $e_2^{N-1} = 0$ in $H^*(\G(2,N);R)$ so $E_2^{N-1} = 0$. We use this identity below. 

\vspace{10pt}

By construction, there are short exact sequences of chain complexes\[
	\begin{tikzcd}[row sep = 0, column sep=2em]
		0 \ar[r] & X_q^{N-1}\KRC_N(D_\ell;R) \ar[r] & X_q^{N-1}\KRC_N(D_+;R) \ar[r] & X_q^{N-1}\KRC_N(D_s;R) \ar[r] & 0\\
		0 \ar[r] & X_q^{N-1}\KRC_N(D_s;R) \ar[r] & X_q^{N-1}\KRC_N(D_-;R) \ar[r] & X_q^{N-1}\KRC_N(D_\ell;R) \ar[r] & 0
	\end{tikzcd}
\]that induce exact triangles \[
	\begin{tikzcd}[column sep=-2em]
		\rKR_N(D_+,q;R) \ar[rr] & & \rKR_N(D_s,q;R) \ar[ld]\\
		& \rKR_N(D_\ell,q;R) \ar[lu] &
	\end{tikzcd}\qquad \begin{tikzcd}[column sep=-2em]
		\rKR_N(D_s,q;R) \ar[rr] & & \rKR_N(D_-,q;R) \ar[ld]\\
		& \rKR_N(D_\ell,q;R) \ar[lu] &
	\end{tikzcd}
\]where $\rKR_N(D_s,q;R)$ is the homology of $X_q^{N-1}\KRC_N(D_s;R)$. These exact triangles are the direct generalization of the skein exact triangles in Khovanov homology. There is a basepoint operator $X_r$ defined on each of these reduced homology groups, and the maps in the exact triangles intertwine them. Since the basepoint operator $X_r$, at the level of homology, depends only on the component of the link carrying the basepoint (see for example \cite[Lemma 5.16]{MR3447099}), it follows that $X_r = X_q = 0$ on $\rKR_N(D_+,q;R)$ and $\rKR_N(D_-,q;R)$. 

\begin{lem}\label{lem:Nminus1powerZeroOnDs}
	On $\rKR_N(D_s,q;R)$, the basepoint operator $X_r$ satisfies $X_r^{N-1} = 0$. 
\end{lem}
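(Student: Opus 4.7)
The plan is to prove the stronger chain-level statement that $X_r^{N-1}$ vanishes on the subcomplex $X_q^{N-1}\KRC_N(D_s;R)$, and then pass to homology. The key input is the ``Chern-root'' identity
\begin{equation*}
	X_q X_r \;=\; E_2
\end{equation*}
as commuting chain maps on $\KRC_N(D_s;R)$. In $D_s$, the basepoints $q$ and $r$ lie on the two label-$1$ edges that merge into the edge labeled $2$ (see Figure~\ref{fig:basepointsOnResolutions}). Under the splitting-principle philosophy that the paper already invokes to identify actions of symmetric polynomials with cohomology classes on Grassmannians (e.g.~\cite[\S 5]{wang2021sln}), the basepoint operators $X_q, X_r$ on the two incident label-$1$ edges act as the two Chern roots of the rank-$2$ ``bundle'' on the label-$2$ edge, while $E_1, E_2$ act as its first and second elementary symmetric polynomials. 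In Robert--Wagner terms this is a purely local foam identity, obtained by comparing dot placements on the two label-$1$ facets incident to a label-$2$ facet with the corresponding symmetric-function dot placements on the label-$2$ facet itself. The fact that $X_q$ and $X_r$ commute as chain maps is automatic, since they arise from basepoints at distinct locations on the MOY graph and the two dots can be slid past one another on any foam.

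Granted this identity, the lemma follows immediately. The paper has already recorded that $E_2^{N-1} = 0$ on $\KRC_N(D_s;R)$, using $\dim_{\mathbf{R}}\G(2,N) = 4(N-2)$. Any chain in the subcomplex $X_q^{N-1}\KRC_N(D_s;R)$ whose homology computes $\rKR_N(D_s, q; R)$ has the form $X_q^{N-1} y$ for some $y \in \KRC_N(D_s; R)$, and commutativity gives
\begin{equation*}
	X_r^{N-1}\bigl(X_q^{N-1}\, y\bigr) \;=\; (X_q X_r)^{N-1}\, y \;=\; E_2^{N-1}\, y \;=\; 0.
\end{equation*}
Hence $X_r^{N-1}$ vanishes on the whole subcomplex, and \emph{a fortiori} on its homology $\rKR_N(D_s, q; R)$.

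The only nontrivial step is verifying the Chern-root identity $X_q X_r = E_2$ at the chain level; once it is in hand, the rest is one line of algebra. I expect this to be routine within the foam formalism of \cite{MR4164001,wang2021sln}, where both sides are realised as explicit local dot configurations on a neighborhood of the label-$2$ edge and the comparison reduces to the definition of $e_2$ as the product of the two Chern roots. This is the main (and essentially the only) obstacle.
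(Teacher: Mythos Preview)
Your proposal is correct and follows exactly the same route as the paper: prove the chain-level statement on $X_q^{N-1}\KRC_N(D_s;R)$ using the identity $X_qX_r = E_2$ together with $E_2^{N-1}=0$. The only thing the paper adds is a precise citation for the identity you call the ``Chern-root identity'': it is the dot-migration relation \cite[Proposition~3.32~(11)]{MR4164001}, so the step you flagged as the main obstacle is indeed routine.
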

\begin{proof}
	We prove the result at the chain level. Let $E_1$ and $E_2$ be the weight 1 and weight 2 basepoint operators on $\KRC_N(D_s;R)$ associated to the edge labeled $2$. The dot-migration relation \cite[Proposition 3.32 (11)]{MR4164001} implies that $X_rX_q = E_2$. Since $E_2^{N-1} = 0$, it follows that $X_r^{N-1} = 0$ on $X_q^{N-1}\KRC_N(D_s;R)$.
\end{proof}

\subsection{Proofs of the main results}\label{subsec:slNsplitLinkDetection}

\begin{proof}[Proof of Proposition~\ref{prop:freeImpliesDimIndependent}]
	Suppose $\rKR_N(L,q;\F)$ is a free module over $\F[X]/X^N$ where $X = X_r$, and let $b$ be a band that merges the marked components of $L$. There is a diagram $D_\ell$ of $L$ such that $K_b$ is given by $D_-$ and $K_{b+1}$ is given by $D_+$, where $D_-$, $D_+$, and $D_s$ are obtained from $D_\ell$ according to Figure~\ref{fig:basepointsOnResolutions}. Furthermore, we may assume that the basepoints $q,r$ are as shown in the figure. Consider the exact triangle \[
		\begin{tikzcd}[column sep=-1em,row sep=3em]
			\rKR_N(K_{b+1},q;\F) \ar[loop above,dotted,"X_r = 0"] \ar[rr,"h"] & & \rKR_N(D_s,q;\F) \ar[loop above,dotted,"X_r = Y"]\ar[ld,"f"]\\
			& \rKR_N(L,q;\F). \ar[lu,"g"] \ar[loop below,dotted,"X_r = X"] &
		\end{tikzcd}
	\]where the solid arrows intertwine the dotted arrows. For notational convenience, we let $X$ denote the operator $X_r$ on $\rKR_N(L,q;\F)$, and we let $Y$ denote $X_r$ on $\rKR_N(D_s,q;\F)$. By Lemma~\ref{lem:Nminus1powerZeroOnDs}, we have $Y^{N-1} = 0$. Because $\rKR_N(L,q;\F)$ is free over $\F[X]/X^N$, we know that $\Image(X) = \ker(X^{N-1})$. Now observe that \[
		\Image(f) \subseteq \ker(X^{N-1}) = \Image(X) \subseteq \ker(g)
	\]where the first containment follows from the identities $Y^{N-1} = 0$ and $f\circ Y = X\circ f$. The second containment follows by similar reasoning. 
	By exactness of the triangle, the containments are equalities. Write $\dim \rKR_N(L,q;\F) = N\cdot d$, so that \[
		\rk(g) = \dim \rKR_N(L,q;\F) - \dim \ker(g) = N d - (N-1) d = d.
	\]It follows that $\dim \rKR_N(K_{b+1},q;\F) = d + \rk(h)$ and $\dim \rKR_N(D_s,q;\F) = (N-1)d + \rk(h)$. Similar reasoning applied to the exact triangle \[
		\begin{tikzcd}[column sep=-1em,row sep=3em]
			\rKR_N(D_s,q;R) \ar[rr,"c"] & & \rKR_N(K_b,q;R) \ar[ld,"a"]\\
			& \rKR_N(L,q;R) \ar[lu,"b"] &
		\end{tikzcd}
	\]yields $\dim \rKR_N(K_b,q;\F) = d + \rk(c)$ and $\dim \rKR_N(D_s,q;\F) = (N-1)d + \rk(c)$. It follows that $\rk(c) = \rk(h)$ so \[
		\dim \rKR_N(K_{b+1},q;\F) = \dim \rKR_N(K_b,q;\F).\qedhere
	\]
\end{proof}

\begin{proof}[Proof of Proposition~\ref{prop:threeEquivalentStatements}]
	It is clear that the second statement implies the third. We show that the third implies the first. Assume that we have a band $b$ for which $\dim \rKR_N(K_{b+n},q;\F) < M$ for some $M$ independent of $n$. By the universal coefficient theorem, we know that \[
		\dim \rKR_N(K_{b+n},q;\Q) \leq \dim \rKR_N(K_{b+n},q;\F) < M.
	\]Next, we show that $\dim \rKh(m(K_{b+n}),q;\Q) < M$ where $m(K_{b+n})$ denotes the mirror of $K_{b+n}$. This is certainly true if $N = 2$ because $\rKh(m(K_{b+n});\Q) \cong \rKR_2(K_{b+n};\Q)$. For $N > 2$, the rank-reducing spectral sequence \cite[Theorem 5]{MR3982970} yields \[
		\dim \rKR_{N-1}(K_{b+n},q;\Q) \leq \dim \rKR_N(K_{b+n},q;\Q)
	\]from which the claim follows by induction. \cite[Theorem 5]{MR3982970} is stated for knots and the corresponding result for links is left to the reader. The relevant spectral sequence for our purposes has $E_2$-page $\rKR_N(K_{b+n},q;\Q)$ and converges to the direct sum of the vector spaces $\rKR_{N-1}(J,q;\Q)$ over all sublinks $J \subseteq K_{b+n}$ that contain the component of $K_{b+n}$ marked by $q$. The direct summand corresponding to $J = K_{b+n}$ gives the stated inequality. 
	Now by \cite[Corollary 1.7]{dowlin2018spectral}, we have the inequality \[
		\dim \HFKhat(K_{b+n};\Q) \leq 2^{|L| - 2}\rKh(m(K_{b+n}),q;\Q) < 2^{|L| - 2}M
	\]By Theorem~\ref{thm:unboundedHFKhat}, there is an embedded sphere in $S^3\setminus L$ that separates $q$ and $r$. 

	The first statement implies the second by the argument in \cite{wang2020cosmetic} used to the prove the analogous statement for Khovanov homology, knot Floer homology, and instanton knot homology. We give a detailed sketch of the proof. Assume that there is a sphere in $S^3\setminus L$ separating $q$ and $r$, and let $b$ be a band merging the marked components of $L$. Just as in the proof of Proposition~\ref{prop:freeImpliesDimIndependent}, there is a diagram $D_\ell$ of $L$ with associated diagrams $D_-,D_+,D_s$ according to Figure~\ref{fig:basepointsOnResolutions} for which $D_-$ represents $K_b$ and $D_+$ represents $K_{b+1}$. Consider the associated exact triangles \[
		\begin{tikzcd}[column sep=-2em]
			\rKR_N(K_{b+1},q;\F) \ar[rr] & & \rKR_N(D_s,q;\F) \ar[ld]\\
			& \rKR_N(L,q;\F) \ar[lu,"g"] &
		\end{tikzcd}\qquad \begin{tikzcd}[column sep=-2em]
			\rKR_N(D_s,q;\F) \ar[rr] & & \rKR_N(K_b,q;\F) \ar[ld,"a"]\\
			& \rKR_N(L,q;\F) \ar[lu] &
		\end{tikzcd}
	\]It is straightforward to verify that $\dim \rKR_N(K_{b+1},q;\F) = \dim \rKR_N(K_b,q;\F)$ if and only if $\rk(g) = \rk(a)$.

	Assume that $L$ is the split union of links $L'$ and $L''$ where $q \in L'$ and $r \in L''$. Let $K_\#$ denote the link obtained by forming the connected sum of $L'$ with $L''$ along the components marked by $q$ and $r$. Note that $K_\#$ is the special case of $K_b$ when $b$ is trivial. We may choose diagrams $D_\ell^\#,D_-^\#,D_+^\#,D_s^\#$ so that $D_\ell^\#$ represents $L$, and $D_-^\#$ and $D_+^\#$ both represent $K_\#$. There are associated exact triangles \[
		\begin{tikzcd}[column sep=-2em]
			\rKR_N(K_\#,q;\F) \ar[rr] & & \rKR_N(D_s^\#,q;\F) \ar[ld]\\
			& \rKR_N(L,q;\F) \ar[lu,"g_\#"] &
		\end{tikzcd}\qquad \begin{tikzcd}[column sep=-2em]
			\rKR_N(D_s^\#,q;\F) \ar[rr] & & \rKR_N(K_\#,q;\F) \ar[ld,"a_\#"]\\
			& \rKR_N(L,q;\F) \ar[lu] &
		\end{tikzcd}
	\]from which it follows that $\rk(g_\#) = \rk(a_\#)$. We now show that $\rk(g) = \rk(g_\#)$ and $\rk(a) = \rk(a_\#)$ to obtain the equality $\rk(g) = \rk(a)$. 

	By \cite{MR1451821}, there is a ribbon concordance $C_b$ from $K_\#$ to $K_b$. There is an induced map \[
		\rKR_N(C_b;\F)\colon \rKR_N(K_\#;\F) \to \rKR_N(K_b;\F)
	\]which can be shown to be injective \cite{kang2019link,caprau2020khovanov} ultimately based on an argument of Zemke for knot Floer homology \cite{Zem19a}. By the proof of \cite[Proposition 5.7]{wang2020cosmetic}, there are injective maps (displayed below as dotted) making the diagram \[
		\begin{tikzcd}[column sep=small]
			\rKR_N(D_s,q;\F) \ar[rr] & & \rKR_N(K_b,q;\F) \ar[dl,"a"]\\
			& \rKR_N(L,q;\F) \ar[lu] & \\
			\rKR_N(D_s^\#,q;\F) \ar[uu,hook,dotted] \ar[rr] & & \rKR_N(K_\#,q;\F) \ar[dl,"a_\#"] \ar[uu,hook,swap,"\rKR_N(C_b;\F)"]\\
			& \rKR_N(L,q;\F) \ar[uu,hook,dotted,crossing over] \ar[ul] &
		\end{tikzcd}
	\]commute. Because $\rKR_N(L,q;\F)$ is finite-dimensional, the injective map from $\rKR_N(L,q;\F)$ to itself is an isomorphism. It follows that $\rk(a) = \rk(a_\#)$. The proof that $\rk(g) = \rk(g_\#)$ is similar. 
\end{proof}

\begin{proof}[Proof of Theorem~\ref{thm:freeImpliesSplit}]
	The result follows from Propositions~\ref{prop:freeImpliesDimIndependent} and \ref{prop:threeEquivalentStatements}. 
\end{proof}
\begin{proof}[Proof of Theorem~\ref{thm:mainthm}]
	If $\rKR_P(L,q;\F_P)$ is free over $\F_P[X]/X^P$, then there is an embedded sphere in $S^3\setminus L$ separating $q$ and $r$ by Theorem~\ref{thm:freeImpliesSplit}. If $L$ is the split union of links $L'$ and $L''$ where $q \in L'$ and $r \in L''$, then there is an isomorphism\[
		\rKR_P(L,q;\F_P) \cong \rKR_P(L',q;\F_P) \otimes \KR_P(L'';\F_P)
	\]which intertwines $X_r$ on the left with $\Id \otimes\, X_r$ on the right by \cite[Corollary 2.20]{wang2021sln}. It suffices to show that $\KR_P(L'';\F_P)$ is free over $\F_P[X]/X^P$ where $X = X_r$, which follows from \cite[Theorem 1.1]{wang2021sln}. 
\end{proof}

\raggedright
\bibliography{KRsplitLinks}

\newcommand{\etalchar}[1]{$^{#1}$}
\begin{thebibliography}{CGL{\etalchar{+}}20}

\bibitem[AE15]{MR3412088}
Akram~S. Alishahi and Eaman Eftekhary.
\newblock A refinement of sutured {F}loer homology.
\newblock {\em J. Symplectic Geom.}, 13(3):609--743, 2015.

\bibitem[BDL{\etalchar{+}}21]{MR4275096}
John~A. Baldwin, Nathan Dowlin, Adam~Simon Levine, Tye Lidman, and Radmila
  Sazdanovic.
\newblock Khovanov homology detects the figure-eight knot.
\newblock {\em Bull. Lond. Math. Soc.}, 53(3):871--876, 2021.

\bibitem[BHS21]{baldwin2021khovanovcinquefoil}
John~A. Baldwin, Ying Hu, and Steven Sivek.
\newblock Khovanov homology and the cinquefoil.
\newblock arXiv:2105.12102, 2021.

\bibitem[BLS17]{MR3604486}
John~A. Baldwin, Adam~Simon Levine, and Sucharit Sarkar.
\newblock Khovanov homology and knot {F}loer homology for pointed links.
\newblock {\em J. Knot Theory Ramifications}, 26(2):1740004, 49, 2017.

\bibitem[BS15]{MR3332892}
Joshua Batson and Cotton Seed.
\newblock A link-splitting spectral sequence in {K}hovanov homology.
\newblock {\em Duke Math. J.}, 164(5):801--841, 2015.

\bibitem[BS21]{baldwin2021khovanovtrefoil}
John~A. Baldwin and Steven Sivek.
\newblock Khovanov homology detects the trefoils.
\newblock arXiv:1801.07634, 2021.

\bibitem[BSX19]{MR4049809}
John~A. Baldwin, Steven Sivek, and Yi~Xie.
\newblock Khovanov homology detects the {H}opf links.
\newblock {\em Math. Res. Lett.}, 26(5):1281--1290, 2019.

\bibitem[CGL{\etalchar{+}}20]{caprau2020khovanov}
Carmen Caprau, Nicolle González, Christine Ruey~Shan Lee, Adam~M. Lowrance,
  Radmila Sazdanović, and Melissa Zhang.
\newblock On {K}hovanov homology and related invariants.
\newblock arXiv:2002.05247, 2020.

\bibitem[Dow18]{dowlin2018spectral}
Nathan Dowlin.
\newblock A spectral sequence from {K}hovanov homology to knot {F}loer
  homology.
\newblock arXiv:1811.07848, 2018.

\bibitem[Gab83]{Gab83}
David Gabai.
\newblock Foliations and the topology of {$3$}-manifolds.
\newblock {\em J. Differential Geom.}, 18(3):445--503, 1983.

\bibitem[Gab87]{Gab87a}
David Gabai.
\newblock Foliations and the topology of {$3$}-manifolds. {II}.
\newblock {\em J. Differential Geom.}, 26(3):461--478, 1987.

\bibitem[GW10]{MR2601010}
J.~Elisenda Grigsby and Stephan~M. Wehrli.
\newblock On the colored {J}ones polynomial, sutured {F}loer homology, and knot
  {F}loer homology.
\newblock {\em Adv. Math.}, 223(6):2114--2165, 2010.

\bibitem[HN10]{MR2653731}
Matthew Hedden and Yi~Ni.
\newblock Manifolds with small {H}eegaard {F}loer ranks.
\newblock {\em Geom. Topol.}, 14(3):1479--1501, 2010.

\bibitem[HN13]{MR3190305}
Matthew Hedden and Yi~Ni.
\newblock Khovanov module and the detection of unlinks.
\newblock {\em Geom. Topol.}, 17(5):3027--3076, 2013.

\bibitem[JMZ20]{juhasz2020transverse}
Andr\'{a}s Juh\'{a}sz, Maggie Miller, and Ian Zemke.
\newblock Transverse invariants and exotic surfaces in the 4-ball.
\newblock arXiv:2001.07191, 2020.

\bibitem[Juh06]{Juh06}
Andr\'{a}s Juh\'{a}sz.
\newblock Holomorphic discs and sutured manifolds.
\newblock {\em Algebr. Geom. Topol.}, 6:1429--1457, 2006.

\bibitem[Juh08]{Juh08}
Andr\'{a}s Juh\'{a}sz.
\newblock Floer homology and surface decompositions.
\newblock {\em Geom. Topol.}, 12(1):299--350, 2008.

\bibitem[Juh10]{MR2653728}
Andr\'{a}s Juh\'{a}sz.
\newblock The sutured {F}loer homology polytope.
\newblock {\em Geom. Topol.}, 14(3):1303--1354, 2010.

\bibitem[Kan19]{kang2019link}
Sungkyung Kang.
\newblock Link homology theories and ribbon concordances.
\newblock arXiv:1909.06969, 2019.

\bibitem[Kho00]{MR1740682}
Mikhail Khovanov.
\newblock A categorification of the {J}ones polynomial.
\newblock {\em Duke Math. J.}, 101(3):359--426, 2000.

\bibitem[KM11]{MR2805599}
P.~B. Kronheimer and T.~S. Mrowka.
\newblock Khovanov homology is an unknot-detector.
\newblock {\em Publ. Math. Inst. Hautes \'{E}tudes Sci.}, (113):97--208, 2011.

\bibitem[KR08]{MR2391017}
Mikhail Khovanov and Lev Rozansky.
\newblock Matrix factorizations and link homology.
\newblock {\em Fund. Math.}, 199(1):1--91, 2008.

\bibitem[Lip16]{MR3617634}
Robert Lipshitz.
\newblock Heegaard {F}loer homologies.
\newblock In {\em Lectures on quantum topology in dimension three}, volume~48
  of {\em Panor. Synth\`eses}, pages 131--174. Soc. Math. France, Paris, 2016.

\bibitem[LS19]{lipshitz2019khovanov}
Robert Lipshitz and Sucharit Sarkar.
\newblock Khovanov homology also detects split links.
\newblock arXiv:1910.04246, 2019.

\bibitem[LXZ20]{li2020detection}
Zhenkun Li, Yi~Xie, and Boyu Zhang.
\newblock Two detection results of {K}hovanov homology on links.
\newblock arXiv:2005.05897, 2020.

\bibitem[Mar20]{martin2020khovanov}
Gage Martin.
\newblock Khovanov homology detects ${T}(2,6)$.
\newblock arXiv:2005.02893, 2020.

\bibitem[Miy98]{MR1451821}
Katura Miyazaki.
\newblock Band-sums are ribbon concordant to the connected sum.
\newblock {\em Proc. Amer. Math. Soc.}, 126(11):3401--3406, 1998.

\bibitem[Ni14]{MR3294567}
Yi~Ni.
\newblock Homological actions on sutured {F}loer homology.
\newblock {\em Math. Res. Lett.}, 21(5):1177--1197, 2014.

\bibitem[ORS13]{MR3071132}
Peter Ozsv\'{a}th, Jacob Rasmussen, and Zolt\'{a}n Szab\'{o}.
\newblock Odd {K}hovanov homology.
\newblock {\em Algebr. Geom. Topol.}, 13(3):1465--1488, 2013.

\bibitem[OS04a]{MR2065507}
Peter Ozsv\'{a}th and Zolt\'{a}n Szab\'{o}.
\newblock Holomorphic disks and knot invariants.
\newblock {\em Adv. Math.}, 186(1):58--116, 2004.

\bibitem[OS04b]{MR2113020}
Peter Ozsv\'{a}th and Zolt\'{a}n Szab\'{o}.
\newblock Holomorphic disks and three-manifold invariants: properties and
  applications.
\newblock {\em Ann. of Math. (2)}, 159(3):1159--1245, 2004.

\bibitem[OS05]{MR2141852}
Peter Ozsv\'{a}th and Zolt\'{a}n Szab\'{o}.
\newblock On the {H}eegaard {F}loer homology of branched double-covers.
\newblock {\em Adv. Math.}, 194(1):1--33, 2005.

\bibitem[OS06]{MR2249248}
Peter Ozsv\'{a}th and Zolt\'{a}n Szab\'{o}.
\newblock Lectures on {H}eegaard {F}loer homology.
\newblock In {\em Floer homology, gauge theory, and low-dimensional topology},
  volume~5 of {\em Clay Math. Proc.}, pages 29--70. Amer. Math. Soc.,
  Providence, RI, 2006.

\bibitem[OS08]{MR2377279}
Peter Ozsv\'{a}th and Zolt\'{a}n Szab\'{o}.
\newblock Knot {F}loer homology and integer surgeries.
\newblock {\em Algebr. Geom. Topol.}, 8(1):101--153, 2008.

\bibitem[Ras15]{MR3447099}
Jacob Rasmussen.
\newblock Some differentials on {K}hovanov-{R}ozansky homology.
\newblock {\em Geom. Topol.}, 19(6):3031--3104, 2015.

\bibitem[RW20]{MR4164001}
Louis-Hadrien Robert and Emmanuel Wagner.
\newblock A closed formula for the evaluation of foams.
\newblock {\em Quantum Topol.}, 11(3):411--487, 2020.

\bibitem[Sar11]{MR2812456}
Sucharit Sarkar.
\newblock A note on sign conventions in link {F}loer homology.
\newblock {\em Quantum Topol.}, 2(3):217--239, 2011.

\bibitem[Wan20]{wang2020cosmetic}
Joshua Wang.
\newblock The cosmetic crossing conjecture for split links.
\newblock arXiv:2006.01070, 2020.

\bibitem[Wan21a]{MR4311818}
Joshua Wang.
\newblock Link {F}loer homology also detects split links.
\newblock {\em Bull. Lond. Math. Soc.}, 53(4):1037--1044, 2021.

\bibitem[Wan21b]{wang2021sln}
Joshua Wang.
\newblock On sl({N}) link homology with mod {N} coefficients.
\newblock arXiv:2111.02287, 2021.

\bibitem[Wed19]{MR3982970}
Paul Wedrich.
\newblock Exponential growth of colored {HOMFLY}-{PT} homology.
\newblock {\em Adv. Math.}, 353:471--525, 2019.

\bibitem[XZ20]{xie2020links}
Yi~Xie and Boyu Zhang.
\newblock On links with {K}hovanov homology of small ranks.
\newblock arXiv:2005.04782, 2020.

\bibitem[XZ21]{xie2021classification}
Yi~Xie and Boyu Zhang.
\newblock Classification of links with {K}hovanov homology of minimal rank.
\newblock arXiv:1909.10032, 2021.

\bibitem[Zem19]{Zem19a}
Ian Zemke.
\newblock Knot {F}loer homology obstructs ribbon concordance.
\newblock {\em Ann. of Math. (2)}, 190(3):931--947, 2019.

\end{thebibliography}
\bibliographystyle{alpha}

\vspace{10pt}

\textit{Department of Mathematics}

\textit{Harvard University}

\textit{Science Center, 1 Oxford Street}

\textit{Cambridge, MA 02138}

\textit{USA}

\vspace{10pt}

\textit{Email:} \texttt{jxwang@math.harvard.edu}

\end{document}